\documentclass[12pt,oneside,reqno]{amsart}

\usepackage{amsmath}
\usepackage{amsthm,amssymb,mathrsfs,mathtools,color}
\usepackage{dsfont}

\usepackage[pagebackref,hypertexnames=false]{hyperref}


\newcommand{\C}{\mathbb{C}}
\newcommand{\E}{\mathbb{E}}
\newcommand{\F}{\mathbb{F}}
\newcommand{\Fq}{\F_q}
\newcommand{\Pq}{\mathcal{P}_q}
\newcommand{\ov}{\overline}

\newcommand{\floor}[1]{\left\lfloor{#1}\right\rfloor}

\newcommand{\gen}[1]{\left\langle #1 \right\rangle}

\newcommand{\GH}{\widehat{G/H}}

\DeclareMathOperator{\ER}{ER}\DeclareMathOperator{\RC}{RC}

\newcommand{\ol}{\overline}
\newcommand{\ul}{\underline}

\newcommand{\bz}{\mathbf{0}}

\newcommand{\Sq}{\mathcal{S}_q}
\newcommand{\Sqr}{\Gamma^{(r)}}
\newcommand{\Sqrd}{\widehat{\Fq^\times/\Sqr}}

\newcommand{\bk}{\mathbf{k}}
\newcommand{\bt}{\mathbf{t}}

\DeclareMathOperator{\Cay}{Cay}

\newcommand{\vcdim}{\operatorname{VCdim}}
\newcommand{\vcdimg}{\vcdim_G}
\newcommand{\vcdimgr}{\vcdimg'}
\newcommand{\vcdimf}{\vcdim_{\Fq}}

\newcommand{\aqr}{\alpha_q^{(r)}}
\newcommand{\bqr}{\beta_q^{(r)}}
\newcommand{\mqr}{m_q^{(r)}}
\newcommand{\uar}{\ol{\alpha}^{(r)}}
\newcommand{\lar}{\ul{\alpha}^{(r)}}
\newcommand{\ubr}{\ol{\beta}^{(r)}}
\newcommand{\lbr}{\ul{\beta}^{(r)}}

\newcommand{\geqs}{\geqslant}
\newcommand{\leqs}{\leqslant}

\newcommand{\1}{\mathds{1}}

\newtheorem{theorem}{Theorem}[section]
\newtheorem{lemma}[theorem]{Lemma}

\newtheorem{conjecture}[theorem]{Conjecture}
\newtheorem{proposition}[theorem]{Proposition}

\theoremstyle{definition}

\newtheorem{definition}[theorem]{Definition}

\begin{document}
 
\title{The VC dimension of quadratic residues in finite fields}

\author{Brian McDonald}
\address{Department of Mathematics, University of Georgia}
\email{\href{mailto:brian.mcdonald@uga.edu}{brian.mcdonald@uga.edu}}

\author{Anurag Sahay}
\address{Department of Mathematics, Purdue University}
\email{\href{mailto:anuragsahay@purdue.edu}{anuragsahay@purdue.edu}}

\author{Emmett L. Wyman}
\address{Department of Mathematics and Statistics, Binghamton University}
\email{\href{mailto: ewyman@math.binghamton.edu}{ewyman@math.binghamton.edu}}

\begin{abstract}

We study the Vapnik--Chervonenkis (VC) dimension of the set of quadratic residues (i.e. squares) in finite fields, $\Fq$, when considered as a subset of the additive group. We conjecture that as $q \to \infty$, the squares have the maximum possible VC-dimension, viz. $(1+o(1))\log_2 q$. We prove, using the Weil bound for multiplicative character sums, that the VC-dimension is $\geqs (\tfrac{1}{2} + o(1))\log_2 q$. We also provide numerical evidence for our conjectures. The results generalize to multiplicative subgroups $\Gamma \subseteq \Fq^\times$ of bounded index.

\end{abstract}

\maketitle

\section{Introduction}

Consider the setting of an Abelian group written additively. For any subset, one defines the Vapnik--Chervonenkis dimension (VC-dimension for conciseness) of $S$ as follows.

\begin{definition}[VC-dimension in groups]\label{def: vcdimg} Let $(G,+)$ be an Abelian group, and let $S \subseteq G$ be an arbitrary subset. We say that $S$ \emph{shatters} $Y = \{y_1,\cdots,y_n\} \subseteq G$ if for any $\emptyset \subseteq A \subseteq Y$, there is an $x\in G$ such that 
\[ y_j \in A \iff y_j \in (S+x)\]
for $1\leqs j\leqs n$. Here, and throughout, we assume that $|Y| = n$ so that $y_j \neq y_k$ for $1 \leqs j < k \leqs n$. Further, we define the VC-dimension of $S$ to be the cardinality of the largest set $Y$ that is shattered by $S$, and we denote it by $\vcdimg(S)$
\end{definition}

Readers familiar with the definition of VC-dimension in the language of set systems or classifiers (see, e.g., \cite[Chapter~6]{understandingML}) should note that this is the usual notion of VC-dimension relative to the set system $\mathscr{P}_S$ of translates of $S$, 
\[ \mathscr{P}_S = \{ S + x : x \in G\},\]
where $S + x = S + \{x\} = \{ z + x : z \in S\}$, or equivalently relative to the classifiers 
\[ \mathscr{H}_S = \{ \1_{S+x} : x\in G, \} \]
where here and throughout, $\1_S:G \to \{0,1\}$ is the indicator function of the set $S$. Further, a standard argument shows that $\vcdimg(S) \leqs \log_2 |G|$ for every $S$, where $\lvert \cdot \rvert$ denotes cardinality.

A class of ambient groups of interest to us are the vector spaces over finite fields -- i.e., $G = \Fq^d$ for prime power $q$ and some dimension $d\geqs 1$. A specific instance of this type was considered by the Fitzpatrick, Iosevich, McDonald, and Wyman \cite{circlesFIMW}, where they investigated a circle in a plane $G = \Fq^2$. Another example of this type was considered by Iosevich, McDonald, and Sun \cite{dotproductIMS} where they considered $G = \Fq^3$, and sets defined using dot products. Since the initial dissemination of our work, several related works along similar themes as \cite{circlesFIMW,dotproductIMS} have appeared \cite{REUpaper,REUpaper2,phamparallel,refpaper}.
 
Our objective in this paper is to investigate the VC-dimension of sets with multiplicative structure in $\Fq$, when considered as subsets of the additive group $(\Fq,+)$. Our particular interest is in the set of quadratic residues (i.e. squares) $\Sq \subseteq \Fq$ given by
\[ \Sq = \{ x^2 : x \in \Fq^\times\}. \]
We remark at this point that our analysis can be easily adjusted to include the number $0$ in $\Sq$ if the reader desires. For economy of notation, however, we adopt the point of view that $0$ is ``half" an element of $\Sq$, a decision which we shall explain later. Concretely, we mean that
\begin{equation*}
\1_{\Sq}(y) = \begin{cases}
1 & \text{ if } y = x^2, x \in \Fq^\times, \\
1/2 & \text{ if } y = 0,\\
0 & \text{ otherwise,}
\end{cases}
\end{equation*}
where $\1_{\Sq}$ is the indicator function of $\Sq$. This requires some modification to the meaning of \emph{shattering} from Definition~\ref{def: vcdimg}; for our purposes, it suffices to add the extra requirement that the $x \in G$ such that $A = Y \cap (\Sq + x)$ satisfies $x \notin Y$.

Another viewpoint on our problem is provided by using graph theoretic terminology. In this article, graphs are directed, do not have multiple edges, and may have loops. We denote graphs also by the letter $G$, but this should cause no confusion, as the the underlying vertex set of our graphs shall be Abelian groups. For any vertex $v \in G$, the (out)-neighbourhood of $v$ is given by
\[ N(v) = \{ w \in G : v \to w \text{ in } G \},\] 
and the VC-dimension of the set family of neighbourhoods is called the VC-dimension of the graph. This is encapsulated in the following definition.

\begin{definition}[VC-dimension in graphs] Let $G = (V,E)$ be a directed graph. We say that $E$ shatters $Y = \{y_1,\cdots,y_n\} \subseteq G$ if for any $\emptyset \subseteq A \subseteq Y$, there is an $x\in G$ such that 
\[ y_j \in A \iff x \to y_j \text{ in } G \]
for $1\leqs j\leqs n$. Further, we define the VC-dimension of $G = (V,E)$ to the be cardinality of the largest set $Y$ that is shattered by $E$, and we denote it by $\vcdimgr(E)$
\end{definition}
The astute reader will notice that the above generalizes Definition~\ref{def: vcdimg} as for a group $G$ and a subset $S \subseteq G$, if we set $E$ to be the edge-set of the Cayley (di)graph $\Cay(G,S)$, then
\[ \vcdimg(S) = \vcdimgr(E).\]
Thus, if we define $\Pq = \Cay(\Fq,\Sq)$ to be the usual notion of the Paley (di)graph, we see that the problem we are interested in is exactly the question of the VC-dimension of $\Pq$. Paley graphs are an important example of pseudorandom graphs (see, for example, the seminal work of Chung, Graham, and Wilson \cite{pseudorandomgraphsCGW}).

VC-dimension can be thought of as a measure of complexity when viewed internally from a structure (e.g., the group structure or the graph structure). Thus, if a set $S \subseteq G$ has high additive structure, one might expect a low VC-dimension. Conversely, if $S$ has low additive structure, one should expect a high VC-dimension. There are numerous recent works in the combinatorics literature where the assumption of bounded VC-dimension delivers strong structure on the involved sets (see, for example, \cite{alonfoxzhao},\cite{sisask},\cite{foxpachsuk1},\cite{foxpachsuk2}, \cite{lovaszegedy}, \cite{alonfischernewman}). 

Since $\Sq$ is clearly multiplicatively structured, and since one expects that addition and multiplication in $\Fq$ are only loosely correlated due to the sum-product phenomenon (see, for example, \cite{sumproductBKT},\cite{sumproductT},\cite{sumproductRSS}, and references therein), it is natural to expect that $\Sq$ will have the maximum possible VC-dimension, $\log_2 q$. Furthermore, since $\Pq$ is pseudorandom, it is similarly natural to expect that it would have maximum possible VC-dimension.

To state our expectations in this regard, we introduce two quantities $\alpha_q$ and $\beta_q$. The former is defined by
\[ \alpha_q = \frac{\vcdimf(\Sq)}{\log_2 q}. \]
For the latter, we define first $m_q$ to be the largest integer such that \emph{all} subsets $Y \subseteq \Fq$ of size $|Y| \leqs m_q$ are shattered by $\Sq$. Then,
\[ \beta_q = \frac{m_q}{\log_2 q}. \] 
The quantity $m_q$ is called the testing dimension by some authors (for example \cite{VCdimrandomgraph} and \cite{testingdimension}). It is straightforward to see that $0\leqs \beta_q \leqs \alpha_q\leqs 1$. Finally, we further define
\begin{equation} \label{ab def} \begin{split} 
\ol{\alpha} = \limsup_{q\to\infty} \alpha_q, &\qquad \ul{\alpha} = \liminf_{q\to\infty} \alpha_q,\\
\ol{\beta} = \limsup_{q\to\infty} \beta_q, &\qquad \ul{\beta} = \liminf_{q\to\infty} \beta_q. \end{split}\end{equation}
We can now formally state our aforementioned expectation. 
\begin{conjecture}\label{conj: main squares} Let $\ul{\alpha}$ be as defined in \eqref{ab def}. Then, $\ul{\alpha} = 1$. In particular, it follows that $\ol{\alpha} = \ul{\alpha}$ and further, that
\[ \vcdimf(\Sq) = (1+o(1))\log_2 q. \]
\end{conjecture}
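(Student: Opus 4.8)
The plan is to attack Conjecture~\ref{conj: main squares} through multiplicative character sums; I will first record how far the Weil bound alone carries the argument, and then isolate the step I expect to be the obstacle. Since $\alpha_q\leqs 1$ for all $q$, it suffices to produce, for each $\epsilon>0$ and each sufficiently large $q$, a set $Y=\{y_1,\dots,y_n\}\subseteq\Fq$ with $n\geqs(1-\epsilon)\log_2 q$ that is shattered by $\Sq$; the equalities $\ol{\alpha}=\ul{\alpha}$ and $\vcdimf(\Sq)=(1+o(1))\log_2 q$ then follow at once. Let $\chi$ be the quadratic character of $\Fq^\times$, extended by $\chi(0)=0$; since we may require $x\notin Y$, the condition $y_j\in\Sq+x$ is $\chi(y_j-x)=1$. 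For $\varepsilon\in\{\pm1\}^n$ put
\[ N(\varepsilon)=\#\{x\in\Fq\setminus Y:\ \chi(y_j-x)=\varepsilon_j,\ 1\leqs j\leqs n\}, \]
so $Y$ is shattered exactly when $N(\varepsilon)\geqs 1$ for every $\varepsilon$. Writing $\1\bigl[\chi(y_j-x)=\varepsilon_j\bigr]=\tfrac12\bigl(1+\varepsilon_j\chi(y_j-x)\bigr)$, expanding over $j$, and using multiplicativity of $\chi$, one obtains the exact identity
\[ N(\varepsilon)=\mu_0+\frac{1}{2^n}\sum_{\emptyset\neq I\subseteq\{1,\dots,n\}}\Bigl(\prod_{i\in I}\varepsilon_i\Bigr)\,W(I),\qquad \mu_0:=\frac{q-n}{2^n},\quad W(I):=\sum_{x\in\Fq\setminus Y}\chi\Bigl(\prod_{i\in I}(y_i-x)\Bigr). \]
Here $W(I)=O(n)$ when $|I|=1$, while for $|I|\geqs 2$ the polynomial $\prod_{i\in I}(y_i-x)$ is squarefree and not a square, so the Weil bound gives $|W(I)|\leqs(|I|-1)\sqrt q+O(n)$. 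Hence $N(\varepsilon)\geqs\mu_0-O(n\sqrt q)>0$ as soon as $2^n\leqs\sqrt q/n$, i.e.\ $n\leqs(\tfrac12-o(1))\log_2 q$; as this uses nothing about $Y$ beyond distinctness of its entries, it also yields $\ul{\beta}\geqs\tfrac12$. This is the bound one can actually prove, and the remaining factor of $2$ is exactly what would require a new idea.

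To go further, the plan is to replace the ``one pattern at a time'' estimate by a concentration argument. Note $\mu_0=\E_\varepsilon[N(\varepsilon)]$, and the balls-in-bins heuristic suggests that for a suitable (random or carefully designed) $Y$ the $q$ values of $x$ should populate all $2^n$ cells of $\{\pm1\}^n$ once $\mu_0\gg n$ -- precisely the range $n=(1-o(1))\log_2 q$. To realize this I would bound, for an even integer $2k$,
\[ \Pr_\varepsilon[N(\varepsilon)=0]\ \leqs\ \mu_0^{-2k}\,\E_\varepsilon\bigl[(N(\varepsilon)-\mu_0)^{2k}\bigr], \]
and exploit that the moment on the right has a clean expansion: squaring out the identity above,
\[ \E_\varepsilon\bigl[(N(\varepsilon)-\mu_0)^{2k}\bigr]=\frac{1}{2^{2kn}}\sum_{(I_1,\dots,I_{2k})}W(I_1)\cdots W(I_{2k}), \]
the sum over $2k$-tuples of \emph{nonempty} subsets $I_l\subseteq\{1,\dots,n\}$ in which every index lies in an even number of the $I_l$. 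The ``diagonal'' tuples -- those that split into $k$ pairs of equal sets -- contribute at most $(2k-1)!!\,\bigl(2^{-2n}\sum_{\emptyset\neq I}W(I)^2\bigr)^k$, and since $\sum_{\emptyset\neq I}W(I)^2=O(n^2 q\,2^n)$ by the Weil bound this is $\leqs(2k-1)!!\,(Cn^2\mu_0)^k$; choosing $k$ of order $n$ makes this smaller than $\tfrac12\mu_0^{2k}2^{-n}$ whenever $\mu_0$ exceeds a sufficiently large power of $n$, i.e.\ throughout $n\leqs(1-\epsilon)\log_2 q$. The whole argument then reduces to showing that the \emph{off-diagonal} tuples contribute at most $\tfrac12\mu_0^{2k}2^{-n}$.

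The hard part -- and, I expect, the genuine obstacle -- is exactly this last bound. There are about $2^{(2k-1)n}$ off-diagonal tuples, and bounding each factor $|W(I_l)|$ by its Weil estimate $O(n\sqrt q)$ overshoots the target by a factor of order $(n^2 4^n/q)^k$, which merely recovers the $4^n<q$ barrier once more. What is really needed is square-root cancellation in the sum $\sum W(I_1)\cdots W(I_{2k})$ over off-diagonal tuples -- equivalently, one must know that the quantities $W(I)=\sum_x\chi\bigl(\prod_{i\in I}(y_i-x)\bigr)$ behave, uniformly over the exponentially many subsets $I$ arising here, like independent mean-zero variables of size $\sqrt q$. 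This is a strong joint-equidistribution statement for the family of quadratic character sums attached to the polynomials $\prod_{i\in I}(y_i-x)$: for any one fixed family it follows from Deligne's equidistribution theorem together with a Katz-style monodromy computation, but the uniformity in $k$ and in $Y$ needed here -- over $\sim q^{\,O(n)}$ distinct configurations -- appears to be out of reach of present methods. Morally it is of a piece with improving the $\sqrt q$-bound for the clique number of Paley graphs, or with controlling long sign patterns of the Liouville function. This is why the statement remains a conjecture, and why I regard the off-diagonal cancellation estimate as the heart of the matter.
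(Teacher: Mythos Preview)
The statement you are addressing is a \emph{conjecture} in the paper, not a theorem; the authors say explicitly that they cannot prove it and offer instead the partial result $\ul{\beta}\geqs\tfrac12$ (their Theorem~\ref{thm: main squares}). Your proposal does not prove the conjecture either, and to your credit you say so plainly. The first half of your write-up --- the expansion $N(\varepsilon)=\mu_0+2^{-n}\sum_{\emptyset\neq I}(\prod_{i\in I}\varepsilon_i)W(I)$ followed by the pointwise Weil bound on each $W(I)$ --- is precisely the paper's proof of its Theorem~\ref{thm: main squares} (Lemma~\ref{lem: main} with $r=2$), and the conclusion $n\leqs(\tfrac12-o(1))\log_2 q$ is the same. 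So on the part that can actually be established, you and the paper agree both in method and in result.

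The second half of your proposal --- the $2k$-th moment in $\varepsilon$ and the diagonal/off-diagonal split --- is a coherent plan, and your identification of the obstruction is accurate. The moment identity is correct, the diagonal tuples are genuinely controlled throughout $n\leqs(1-\epsilon)\log_2 q$ as you say, and bounding the off-diagonal factors pointwise by Weil does collapse back to $4^n<q$. What would be needed is cancellation among the $W(I)$ as $I$ ranges over exponentially many subsets --- a uniform joint-equidistribution statement for the family of character sums $\sum_x\chi\bigl(\prod_{i\in I}(y_i-x)\bigr)$ --- and, as you note, nothing of that strength is currently available. Your analogy with the Paley-graph clique number is apt; the paper itself remarks that ``breaking the $1/2$-barrier here would require new insights.'' In short, there is no discrepancy to report: neither you nor the paper proves the conjecture, and your discussion of where the proof breaks down is sound.
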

We are not able to prove this conjecture. However, we have partial progress, in the form of the following theorem.
\begin{theorem}\label{thm: main squares} Let $\ul{\beta}$ be as defined in \eqref{ab def}. Then, $\ul{\beta} \geqs 1/2$. In particular, for any $\epsilon > 0$, we have that for all $q \gg_\epsilon 1$, and for every subset $Y \subseteq \Fq$ with $|Y| \leqs (\tfrac{1}{2}-\epsilon) \log_2 q$, the set of squares $\Sq$ shatters $Y$. 
\end{theorem}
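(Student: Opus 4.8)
The plan is to translate the shattering condition into a lower bound for a multiplicative character sum and then estimate that sum via the Weil bound. Fix $\epsilon > 0$ and a set $Y = \{y_1,\dots,y_n\} \subseteq \Fq$ with $n = |Y| \leqs (\tfrac12 - \epsilon)\log_2 q$, and fix an arbitrary target subset $A \subseteq Y$. Let $\chi$ denote the quadratic character of $\Fq$, extended by $\chi(0) = 0$; the ``half element'' convention for $\Sq$ is designed precisely so that $\1_{\Sq}(y) = \tfrac12\big(1 + \chi(y)\big)$ for every $y \in \Fq$. Writing $\eta_j = 2\,\1_A(y_j) - 1 \in \{+1,-1\}$, a shift $x$ satisfies the (modified) shattering condition $\1_{\Sq}(y_j - x) = \1_A(y_j)$ for all $j$ exactly when $\chi(y_j - x) = \eta_j$ for all $j$; since $\eta_j \neq 0$ this forces $y_j - x \neq 0$, hence $x \notin Y$ automatically, which takes care of the extra requirement in the definition of shattering. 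So it suffices to prove that the number $N(A)$ of such $x$ is positive.

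Next I would write $N(A)$ as a character sum. For $x \notin Y$ the product $\prod_{j=1}^n \tfrac12\big(1 + \eta_j \chi(y_j - x)\big)$ equals $1$ if $x$ cuts out $A$ and $0$ otherwise, while for the $n$ values $x \in Y$ it lies in $[0,1]$; hence $\big|\, N(A) - \sum_{x \in \Fq} \prod_{j=1}^n \tfrac12(1 + \eta_j \chi(y_j - x))\,\big| \leqs n$. Expanding the product over subsets $T \subseteq \{1,\dots,n\}$ and exchanging the order of summation, the term $T = \emptyset$ gives the main contribution $q/2^n$, and for each nonempty $T$ one is left with $2^{-n}\big(\prod_{j \in T}\eta_j\big)\sum_{x \in \Fq}\chi\big(\prod_{j\in T}(y_j - x)\big)$, using the multiplicativity of $\chi$. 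The polynomial $\prod_{j\in T}(y_j - x)$ is squarefree of degree $|T| \geqs 1$ (as the $y_j$ are distinct), so it is not a scalar multiple of a square of a polynomial, and the Weil bound for multiplicative character sums yields $\big|\sum_{x\in\Fq}\chi\big(\prod_{j\in T}(y_j - x)\big)\big| \leqs (|T| - 1)\sqrt{q}$. Summing over all nonempty $T$ gives $\big|\sum_{x}\prod_j \tfrac12(1 + \eta_j\chi(y_j-x)) - q/2^n\big| \leqs 2^{-n}\sum_{T \neq \emptyset}(|T|-1)\sqrt q \leqs n\sqrt q$, and therefore
\[ N(A) \;\geqs\; \frac{q}{2^n} - n\sqrt{q} - n . \]

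It remains to see that the right-hand side is positive for all large $q$. This holds as soon as $q/2^n > n\sqrt q + n$, i.e., after taking logarithms, roughly $n + \log_2(2n) < \tfrac12 \log_2 q$; since $n \leqs (\tfrac12 - \epsilon)\log_2 q$ and $\log_2 n = o(\log_2 q)$, this is satisfied once $q \gg_\epsilon 1$. As $A \subseteq Y$ was arbitrary and $Y$ was an arbitrary set of the given size, $\Sq$ shatters every such $Y$; this gives $\beta_q \geqs \tfrac12 - \epsilon - o(1)$, hence $\ul\beta \geqs \tfrac12 - \epsilon$ for every $\epsilon > 0$, and so $\ul\beta \geqs 1/2$.

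The steps I expect to require the most care are the bookkeeping ones: checking that the half-element convention makes $\1_{\Sq} = \tfrac12(1 + \chi)$ an exact identity (so the only error beyond the character-sum estimate is the $O(n)$ coming from $x \in Y$), and verifying that $\prod_{j\in T}(y_j - x)$ meets the hypotheses of the Weil bound. The more fundamental obstacle — and the reason this argument reaches exponent $1/2$ rather than the conjectured $1$ — is the factor $2^n$ in the error term: it arises from summing the absolute values of $2^n$ character sums, each of size up to $\sqrt q$, and improving on it would require genuine cancellation among those $2^n$ sums, which the Weil bound alone does not provide.
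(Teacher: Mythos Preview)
Your proof is correct and follows essentially the same route as the paper's: Fourier-expand the indicator of the target pattern via the quadratic character, expand the product, and bound each nontrivial term by the Weil bound, yielding a positive count once $n \leqs (\tfrac12-\epsilon)\log_2 q$. The only cosmetic differences are that the paper carries out the computation for general $r$th powers (with $\chi_r$ and coset labels $\mathbf t$ in place of your signs $\eta_j$) and absorbs the $x\in Y$ contributions into the fuzzy-membership convention rather than isolating them as an $O(n)$ error.
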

In particular, it is immediate from this theorem that $\ul{\alpha} \geqs 1/2$, and further that
\[ \vcdimf(\Sq) \geqs (\tfrac{1}{2}-\epsilon)\log_2 q. \]
for all $q\gg_\epsilon 1$.

Our results in this regard follow from the broader context where $\Sq$ (which is a multiplicative subgroup of index $2$) is replaced by a more general multiplicative subgroup $\Gamma \subseteq \Fq^\times$. In particular, recall that since $\Fq^\times$ is cyclic, for every $r \mid (q-1)$, $\Fq^\times$ has exactly one subgroup of index $r$, namely the set of $r$th powers, 
\[ \Sqr = \{ x^r : x \in \Fq^\times\}, \]
so that $\Gamma^{(2)} = \Sq$. Further, as with squares, we adopt the point of view that $0$ is a ``fuzzy" member of $\Sqr$ with weight $1/r$. In fact, we assume that for each multiplicative coset $t\Sqr = \{ tx^r : x \in \Fq^\times\}$, $0$ is a fuzzy member of $t\Sqr$ with weight $1/r$. That is, for $t\neq 0$,
\begin{equation} \label{fuzzy}
\1_{t\Sqr}(y) = \begin{cases}
1 & \text{ if } y  \in t\Sqr, \\
1/r & \text{ if } y = 0,\\
0 & \text{ otherwise.}
\end{cases}
\end{equation}

Analogous to the $r = 2$ case, we define
\[ \aqr = \frac{\vcdimf(\Sqr)}{\log_2 q},\qquad \bqr = \frac{\mqr}{\log_2 q}, \]
where now $\mqr$ is the largest integer such that all subsets $Y \subseteq \Fq$ of size $|Y| \leqs \mqr$ are shattered by $\Sqr$. We further define
\begin{equation} \label{ar br def} \begin{split} 
\uar = \limsup_{q\to\infty} \aqr, &\qquad \lar = \liminf_{q\to\infty} \aqr,\\
\ubr = \limsup_{q\to\infty} \bqr, &\qquad \lbr = \liminf_{q\to\infty} \bqr. \end{split}\end{equation}
Then, provided that $\Sqr$ is of bounded index (i.e., $r\ll 1$), we expect the natural generalization of Conjecture~\ref{conj: main squares} to hold, and are able to able to prove the analogous partial progress, with $1/2$ replaced by $(\log_r 2)/2$
\begin{conjecture} \label{conj: main gen} Let $\lar$ be as defined in \eqref{ar br def}. Then, $\lar = 1$. In particular, it follows that $\uar = \lar$ and further that
\[ \vcdimf(\Sqr) = (1+o_r(1))\log_2 q. \]
\end{conjecture}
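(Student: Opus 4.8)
\textbf{Proof proposal for Conjecture~\ref{conj: main gen}.} The plan is to produce, for every $\epsilon>0$ and every sufficiently large $q$, a set $Y\subseteq\Fq$ with $|Y|\geqs(1-\epsilon)\log_2 q$ that is shattered by $\Sqr$; since $\vcdimf(\Sqr)\leqs\log_2 q$ always, letting $\epsilon\to 0$ then gives $\lar=\uar=1$ and hence $\vcdimf(\Sqr)=(1+o_r(1))\log_2 q$. The starting point is the identity $\1_{\Sqr}(z)=\tfrac1r\sum_{\chi^r=1}\chi(z)$, valid for all $z\in\Fq$ including $z=0$ once one sets $\chi(0)=0$ for nontrivial $\chi$ and gives the trivial character value $1$ at $0$ --- this is exactly what the fuzzy weight $1/r$ in \eqref{fuzzy} encodes. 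With it, shattering $Y=\{y_1,\dots,y_n\}$ is the statement that the pattern map $x\mapsto(\1_{\Sqr}(y_1-x),\dots,\1_{\Sqr}(y_n-x))$, as $x$ runs over $\Fq\setminus Y$, hits all $2^n$ ``in/not-in'' patterns. Fixing a pattern indexed by $A\subseteq\{1,\dots,n\}$ and letting $N_A(Y)$ be the number of realizing $x$, expanding the products of character formulas writes $N_A(Y)$ as a combination of $O_{r,n}(1)$ complete multiplicative character sums $\sum_x\chi_1(y_1-x)\cdots\chi_n(y_n-x)$, with main term $\sim q/r^n$ and off-diagonal contributions controlled by the Weil bound.

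I would first attempt the probabilistic route. Take $n=\lceil(1-\epsilon)\log_2 q\rceil$, draw $y_1,\dots,y_n$ independently and uniformly from $\Fq$ (discarding the $O(n^2/q)$-probability collision event), and try to show $\sum_A\Pr_Y[N_A(Y)=0]<1$, so that some $Y$ works. Writing $N_A(Y)=\sum_x f_x(\mathbf{y})$ with $f_x$ the relevant product of indicators, one has $f_x^2=f_x$ off the $n$ points of $Y$, so each moment $\E_Y[N_A(Y)^k]$ unwinds, after averaging the factors over each $y_j$ via $\sum_y\chi(\prod_{i\in T}(y-x_i))=O_{|T|}(\sqrt q)$ for $\emptyset\ne T$, into the Stirling-number sum $\sum_m S(k,m)\lambda^m=B_k(\lambda)$; that is, $N_A(Y)$ has the moments of a Poisson variable of mean $\lambda$, where $\lambda=q/r^n$ for the extreme (all $r$th powers) pattern and larger otherwise. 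Poisson concentration, applied through the even central moments, then yields $\Pr_Y[N_A(Y)=0]\lesssim_k\lambda^{-k/2}$, and $2^n\lambda^{-k/2}<1$ as soon as $\lambda^{k/2}=q^{\epsilon k/2}$ exceeds $2^n=q^{1-\epsilon}$, i.e.\ $k>2(1-\epsilon)/\epsilon$.

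The step I expect to be the genuine obstacle lies exactly here, and it is the square-root-cancellation barrier. The Weil bound only certifies the Poisson shape of $\E_Y[N_A(Y)^k]$ up to a relative error of size $q^{-1/2+o(1)}\lambda^k$, which is dominated by the Poisson signal $\asymp\lambda^{k/2}$ only when $k<1/\epsilon$; combined with the requirement $k>2(1-\epsilon)/\epsilon$ from the union bound, this is feasible only for $\epsilon>1/2$, which merely recovers Theorem~\ref{thm: main squares}. A second, separate difficulty is that for $r\geqs3$ the uniform choice of $Y$ is already doomed, since then $\lambda=q/r^n\ll1$ and the extreme pattern is typically unrealized; one is forced instead to choose $Y$ with multiplicative structure (for instance inside a single coset of $\Sqr$, or a geometric progression) so that the extreme patterns have witnesses, while still controlling the remaining $N_A(Y)$. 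To push past $\epsilon=1/2$ one therefore seems to need genuinely new input: Burgess-type cancellation for these character sums over short or structured ranges of $x$ (plausible when $q$ is prime, but delicate for the products that occur here), or extra cancellation harvested by averaging over a cleverly chosen family of base sets $Y$ or of translates, or --- failing an unconditional argument --- a derivation of Conjecture~\ref{conj: main gen} from a strong equidistribution hypothesis of Chowla/GRH type for products of multiplicative characters over $\Fq$. My expectation is that the structured construction of $Y$ for $r\geqs3$ and the passage beyond the Weil bound are where essentially all of the difficulty is concentrated.
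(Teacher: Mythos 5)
The statement you are addressing is a conjecture: the paper offers no proof of it, and the authors state explicitly that they are unable to prove even the $r=2$ case (Conjecture~\ref{conj: main squares}), settling instead for the partial result of Theorem~\ref{thm: main gen}, which gives only $\lbr \geqs (\log_r 2)/2$. Your proposal, read carefully, is not a proof either — and to your credit you say so. But since it is submitted as a proof attempt, the verdict has to be that the gap is genuine and unclosed. Concretely: the only unconditional input available, the Weil bound of Lemma~\ref{lem: weil}, gives $\lvert N_A(Y) - q/r^n\rvert \leqs n\sqrt{q}$ for each pattern $A$, which guarantees a witness only when $r^n \ll \sqrt{q}/n$, i.e.\ $n \leqs (\tfrac12-\epsilon)\log_r q$. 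This is exactly the threshold of Theorem~\ref{thm: main gen}, and your higher-moment/Poisson refinement does not move it: as you compute, the Weil error term $q^{-1/2+o(1)}\lambda^k$ swamps the Poisson signal $\lambda^{k/2}$ precisely in the regime where the union bound over $2^n$ patterns would need it. The paper's own remark that ``breaking the $1/2$-barrier here would require new insights'' is the same diagnosis.

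Two further points in your sketch deserve emphasis because they go slightly beyond what the paper records. First, your observation that for $r\geqs 3$ a uniformly random $Y$ of size near $\log_2 q$ cannot work — because the all-$\Sqr$ pattern has expected witness count $q/r^n < 1$ — is correct and shows that any proof of Conjecture~\ref{conj: main gen} for $r \geqs 3$ must construct $Y$ with arithmetic structure adapted to $\Sqr$; this is a real constraint that the conjecture's eventual proof (if any) must respect, and it is worth noting that the paper's numerics only probe $r=2$. Second, your proposed escape routes (Burgess-type bounds for incomplete sums of products $\prod_j \chi_r(y_j-x)$, averaging over families of $Y$, or conditioning on a Chowla/GRH-type hypothesis) are reasonable directions but none is carried out, and the Burgess route in particular faces the known difficulty that Burgess-type savings degrade as the number of factors $n$ grows with $q$. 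In short: your account of \emph{why} the statement is hard is accurate and matches the paper's, but no proof of the conjecture exists here, in the paper, or (as far as the authors know) anywhere else.
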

\begin{theorem} \label{thm: main gen} Let $\lbr$ be as defined in \eqref{ar br def}. Then, $\lbr \geqs (\log_r 2)/2$. In particular, for any $\epsilon > 0$, we have that for all $q \gg_{r,\epsilon} 1$, and for every subset $Y \subseteq \Fq$ with $|Y| \leqs (\tfrac{1}{2}-\epsilon) \log_r q$, the set of $r$th powers $\Sqr$ shatters $Y$. 
\end{theorem}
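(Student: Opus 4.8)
The plan is to establish the displayed shattering statement; since $\log_r q = (\log_r 2)(\log_2 q)$, it then follows that $\bqr \geqs (\tfrac{1}{2}-\epsilon)\log_r 2$ for all large $q$, and letting $\epsilon \to 0$ gives $\lbr \geqs (\log_r 2)/2$. We may assume $r \mid q-1$; otherwise $\Sqr$ has index $\gcd(r,q-1) < r$, and the argument below applies a fortiori. Fix $\epsilon > 0$ and a set $Y = \{y_1,\dots,y_n\} \subseteq \Fq$ of distinct elements with $n \leqs (\tfrac{1}{2}-\epsilon)\log_r q$. First I would linearize the membership condition: let $\chi$ be a multiplicative character of $\Fq^\times$ of exact order $r$, extended by $\chi(0)=0$ (and $\chi^0(0):=1$), and fix a non-$r$th-power $g$, so that $\zeta := \chi(g) \neq 1$. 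For $A \subseteq Y$, put $\zeta_j = 1$ if $y_j \in A$ and $\zeta_j = \zeta$ otherwise. Then it suffices to produce $x \in \Fq \setminus Y$ with $\chi(y_j - x) = \zeta_j$ for every $j$: indeed $\chi(y_j-x)=1 \iff y_j - x \in \Sqr$, while $\chi(y_j-x) = \zeta \neq 1 \implies y_j - x \notin \Sqr$.

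Next I would count such $x$ by orthogonality of characters of $\Fq^\times/\Sqr$, using $\1[\chi(z) = \zeta_j] = \tfrac{1}{r}\sum_{k=0}^{r-1}\bar\zeta_j^{\,k}\chi^k(z)$, which evaluates to $1/r$ at $z=0$ --- exactly matching the fuzzy convention \eqref{fuzzy}. The number $N$ of valid $x$ then satisfies
\[ N = \sum_{x\in\Fq}\prod_{j=1}^n\Bigl(\frac{1}{r}\sum_{k=0}^{r-1}\bar\zeta_j^{\,k}\chi^k(y_j-x)\Bigr) - E, \qquad 0 \leqs E \leqs \frac{n}{r}, \]
where $E$ collects the contribution of the $x \in Y$: at $x = y_i$ the factor indexed by $i$ equals exactly $1/r$ and the remaining $n-1$ factors lie in $\{0,1\}$, so each such term is $\leqs 1/r$. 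Multiplying out the product and using multiplicativity of $\chi$, the first sum becomes $r^{-n}\sum_{\bk\in\{0,\dots,r-1\}^n}\bigl(\prod_j\bar\zeta_j^{\,k_j}\bigr)\sum_{x}\chi\bigl(\prod_j(y_j-x)^{k_j}\bigr)$, with the tuple $\bk = \bz$ contributing $q/r^n$.

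Then I would invoke the Weil bound for multiplicative character sums. Since the $y_j$ are distinct and $0 \leqs k_j < r$, the polynomial $\prod_j (y_j - x)^{k_j}$ is a constant times a perfect $r$th power precisely when $\bk = \bz$; for every other $\bk$ it has at most $n$ distinct roots, so $\bigl|\sum_x \chi(\prod_j(y_j-x)^{k_j})\bigr| \leqs (n-1)\sqrt q$. Summing over the $r^n - 1$ nonzero tuples and dividing by $r^n$ yields
\[ N \;\geqs\; \frac{q}{r^n} - (n-1)\sqrt q - \frac{n}{r}. \]
Because $n \leqs (\tfrac{1}{2}-\epsilon)\log_r q$ forces $r^n \leqs q^{1/2-\epsilon}$, the main term $q/r^n \geqs q^{1/2+\epsilon}$ dominates $(n-1)\sqrt q + n/r = O_{r,\epsilon}(\sqrt q\,\log q)$ once $q \gg_{r,\epsilon} 1$; hence $N > 0$ and $Y$ is shattered.

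The argument is essentially a single application of Weil's bound, so I do not anticipate a genuine obstacle. The points needing care are: confirming that the polynomials to which Weil does not apply are exactly the constant ones (immediate from $0 \leqs k_j < r$); checking that the $x \in Y$ correction and the value $\chi(0) = 0$ are genuinely lower order, so that the fuzzy weighting of $0$ in \eqref{fuzzy} is what makes the bookkeeping consistent; and noting that prescribing the single coset $g\Sqr$ for the indices outside $A$ (rather than allowing any of the $r-1$ nontrivial cosets) over-constrains the count but loses nothing. The threshold $\tfrac{1}{2}\log_r q$ appears precisely because it is where $r^n$ reaches $\sqrt q$ and the main term ceases to beat the Weil error --- which is also why this method alone cannot reach the conjectured $\log_r q$.
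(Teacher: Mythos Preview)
Your proposal is correct and follows essentially the same route as the paper: Fourier-expand the indicator of the event $y_j - x \in t_j\Sqr$ via the order-$r$ characters, multiply out, isolate the $\bk=\bz$ main term $q/r^n$, and bound each of the remaining $r^n-1$ complete sums $\sum_x \chi(\prod_j (y_j-x)^{k_j})$ by the Weil bound. The paper simply packages this computation as a separate equidistribution lemma (Lemma~\ref{lem: main}) before specializing $t_j\in\{1,t\}$, whereas you carry out the count directly and are slightly more explicit about removing the $x\in Y$ contribution; these are cosmetic differences only.
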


It is clear that Theorem~\ref{thm: main squares} is a corollary of this one. We now focus on this theorem, and provide an overview of its proof. Broadly speaking, we proceed by showing that $\Sqr$ shatters $Y = \{y_1,\cdots,y_n\}$ provided a certain expression involving the indicator functions of the multiplicative cosets $t\Sqr$ evaluated at $y_j - x$ is positive. Then, we Fourier-expand these indicator functions using the (multiplicative) characters on the quotient group $\Fq^\times/\Sqr$. Since $\Fq^\times$ is cyclic, so is $\Sqrd$, and hence everything can be written in terms of its generator $\chi_r$. Upon doing so, the task reduces to finding nontrivial bounds for sums of the form
\[ \sum_{x\in \Fq} \chi_r(f(x)), \]
where $f(x)$ is a polynomial of the form
\[ f(x) = \prod_{j=1}^n (y_j - x)^{k_j},\]
for some $0\leqs k_j < r$. At this point, we use the Weil bound for multiplicative character sums to upper bound these sums point-wise, which delivers the result. This method is uniform in the choice of $Y$ provided $|Y| = n$ is small enough in terms of $q$, which is what enables us to control the quantity $\lbr$. We remark that an averaging argument that is elementary (relative to the Weil bound, at least) can be used to prove the weaker result that $\ul{\alpha} \geqs 1/2$. It seems that breaking the $1/2$-barrier here would require new insights. 

After we proved our theorem, we discovered that the key idea of applying bounds on complete character sums to show pseudorandomness of the quadratic residues is very, very well known; it was already known to Davenport in 1939 \cite{davenportpseudorandomness}, though, of course, he did not have access to the Weil bound via the Riemann hypothesis then, and had to resort to zero-free regions for the associated $L$-functions instead. The reader may peruse \cite{quasirandomZnCG}, \cite{weil1}, \cite{weil2}, \cite{weil3}, \cite{weil4}, \cite{weil5}, \cite{weil6}, \cite{weil7}, \cite{weil8} for a selection of such applications similar to ours, many of which appear to be independent of each other. See also, \cite{MSSSquadratic} for a recent example in the literature which uses similar ideas. 

In Section~\ref{sec: numerical}, we present numerical evidence towards Conjecture~\ref{conj: main squares}. We restrict our attention there to quadratic residues in prime fields, i.e., $q$ prime and $r = 2$. Our programs were written in the Julia language \cite{julialanguage} and are available with their outputs on Dropbox\footnote{\url{https://www.dropbox.com/sh/ov6upgdgq4h692i/AACaE3cC246OW_zD4qTAfi9Ka?dl=0}} and GitHub\footnote{\url{https://github.com/anuragsahay/vcdimsquares}}. A crucial observation of independent interest that underlies our code is the following:
\begin{proposition} \label{prop: invariant}
The property of being shattered by $\Sq$ is translation-dilation invariant. In other words, for any $Y \subseteq \Fq$, $a \in \Fq^\times$ and $b \in \Fq$, 
\[ Y \textnormal{ is shattered by } \Sq \iff aY + b \textnormal{ is shattered by } \Sq, \]
where $aY + b = \{ ay + b : y \in Y \}$. 
\end{proposition}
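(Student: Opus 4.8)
The plan is to prove Proposition~\ref{prop: invariant} by unwinding the definition of shattering and exploiting two basic facts: that dilation by $a \in \Fq^\times$ either preserves or swaps $\Sq$ with its nontrivial coset, and that both translation and dilation are bijections of $\Fq$ that fix $0$'s fuzzy status appropriately. Concretely, I would first observe that for any fixed $b \in \Fq$, the set $Y$ is shattered by $\Sq$ if and only if $Y + b$ is; this is immediate because $\Sq + x$ ranges over the same family of translates as $\Sq + (x+b)$, so the witness $x$ for a subset $A \subseteq Y$ can simply be replaced by $x + b$ as a witness for $A + b \subseteq Y + b$, and the side condition $x \notin Y$ transforms to $x + b \notin Y + b$ correctly. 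Thus it suffices to handle dilations, i.e., to show $Y$ is shattered $\iff aY$ is shattered for $a \in \Fq^\times$.

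For the dilation step, the key point is that $\Sq$ is a subgroup of $\Fq^\times$ of index $2$, so $a\Sq = \Sq$ when $a$ is a square and $a \Sq = $ the non-residues otherwise; in either case $a\Sq$ is one of the two multiplicative cosets. The cleanest route is to relate shattering of $aY$ by $\Sq$ to shattering of $Y$ by $a^{-1}\Sq$. Suppose $Y$ is shattered by $\Sq$ and fix $A \subseteq Y$; pick $x \notin Y$ with $A = Y \cap (\Sq + x)$, i.e., for $y \in Y$, $y \in A \iff y - x \in \Sq$. Multiplying by $a$, this reads $ay - ax \in a\Sq$, i.e., $ay \in a\Sq + ax$; since $a\Sq$ is a coset of $\Sq$, I need shattering by an arbitrary coset to be equivalent to shattering by $\Sq$. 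So the real content is the auxiliary claim: \emph{for every $t \in \Fq^\times$, a set $Y$ is shattered by $\Sq$ if and only if it is shattered by $t\Sq$.} This follows because $\Sq$ and its complement (the non-residues) are the two cosets, and one checks that shattering by $\Sq$ and shattering by its complementary coset are equivalent — for a given $A \subseteq Y$, a witness $x$ giving $A = Y \cap (\Sq + x)$ against $\Sq$ serves, after noting $Y \cap (\Sq^c + x) = Y \setminus A$ up to the fuzzy treatment of $0$, as a witness for $Y \setminus A$ against the complementary coset; since $A \mapsto Y \setminus A$ is a bijection on subsets of $Y$, the two shattering properties coincide. The fuzzy convention that $\1_{t\Sq}(0) = 1/2$ for all $t$, together with the side condition $x \notin Y$, is exactly what makes this bookkeeping consistent: it ensures $y - x = 0$ never occurs for $y \in Y$, so we never have to worry about whether $0$ lands in $\Sq$ or its complement.

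I would then assemble these pieces: dilation by $a$ turns ``$Y$ shattered by $\Sq$'' into ``$aY$ shattered by $a\Sq$'', and the auxiliary claim turns the latter into ``$aY$ shattered by $\Sq$''; running the argument with $a^{-1}$ gives the converse. Combining with the translation step handles the general affine map $y \mapsto ay + b$. The main obstacle, such as it is, is not conceptual difficulty but careful handling of the fuzzy element $0$ and the side condition $x \notin Y$ through each transformation — in particular verifying that these conditions are preserved under $y \mapsto ay+b$ (clear, since this map is a bijection of $\Fq$ fixing the property $y = 0 \iff ay + b = b$, and it sends $x \notin Y$ to $ax + b \notin aY + b$) and that the complementary-coset equivalence genuinely holds with the $1/2$-weighting rather than only up to an error at $0$. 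I expect the whole proof to be short once these conventions are pinned down.
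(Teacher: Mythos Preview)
Your proposal is correct and follows essentially the same approach as the paper: translation invariance is immediate, dilation by a square is immediate, and dilation by a nonsquare is handled via the observation that it swaps $\Sq$ with its complementary coset, so a witness for $A \subseteq Y$ becomes a witness for $Y \setminus A \subseteq aY$, and the bijection $A \mapsto Y \setminus A$ on subsets preserves shattering. The paper phrases this last step in terms of bit-strings $\bt \mapsto (1,\ldots,1)-\bt$ rather than your auxiliary ``shattering by a coset'' claim, but the content is identical, and your explicit tracking of the side condition $x \notin Y$ and the fuzzy status of $0$ is a welcome addition.
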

It is clear that this will save time when computing the VC dimension of $\Sq$. 

We considered three numerical experiments: 
\begin{enumerate}
\item (Section~\ref{exp: bruteforce}) A direct computation of the VC-dimension of $\Sq$ for prime values of $q$ going up to 300.
\item (Section~\ref{exp: arithmetic}) A computation of the size of the largest shattered arithmetic progression in $\Fq$ for prime values of $q$ going up to 200000.
\item (Section~\ref{exp: probabilistic}) A set of plots of the likelihoods that a random set whose size is a given proportion of the theoretical maximum VC-dimension, $\log_2 q$, is shattered for prime values
\[
	58 \approx 2^{5/0.85} \leqs q \leqs 2^{12/0.7} \approx 144716.
\]
\end{enumerate}

While we are not able to compute experiment (1) out to large primes, we do verify that the VC-dimension of $\Sq$ is $\floor{\log_2q} - 1$ for about 57\% of the primes $5 \leqs q \leqs 300$, while the remainder have the maximum theoretical VC-dimension $\floor{\log_2 q}$ (see Figure \ref{fig: vcdim}). In fact, by cutting the search short upon reaching a shattered set of size $\floor{\log_2q} - 1$, we are able to verify the VC-dimension of $\Sq$ is at least $\floor{\log_2 q} - 1$ for primes up to $512$. Perhaps $\vcdim \Sq < \floor{\log_2 q} - 1$ for some $q$, but it is not possible to tell at the small scales at which our experiment runs. If this happens, it likely happens for $q$ just larger than a power of $2$, such as $257$ or $521$. In fact, the search for a shattered set of the desired size takes quite a bit longer for such primes.
\begin{figure}
	\includegraphics[width=0.75\textwidth]{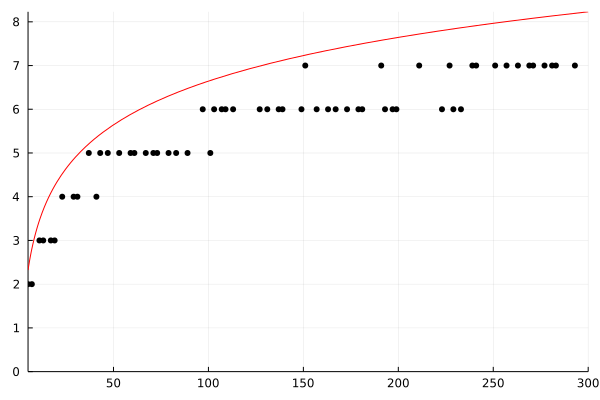}
	\caption{On the horizontal axis: primes $q$ from $5$ to $300$. On the vertical axis: the size of the largest shattered subset found by the first experiment for this $q$. The red curve is the graph of $\log_2 q$, for reference.}
	\label{fig: vcdim}
\end{figure}

For experiment (2), the length of the longest shattered arithmetic progression, in general, varies between $\frac12 \log_2 q$ and $\log_2q$, but certainly seems to concentrate around $\frac{3}{4} \log_2 q$, as indicated in Figure \ref{fig: arith}.
\begin{figure}
	\includegraphics[width=0.75\textwidth]{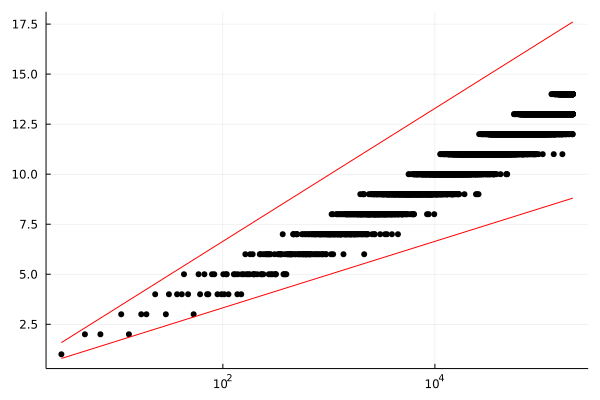}
	\caption{On the horizontal axis: The primes $q$ plotted on a logarithmic scale. Each point has a vertical coordinate equal to the length of the largest shattered arithmetic sequence in $\F_q$. The red upper and lower bounding lines are graphs of $\log_2 q$ and $\frac12 \log_2 q$, respectively.}
	\label{fig: arith}
\end{figure}

The third experiment seems to indicate that, probabilistically speaking, most subsets of size $\floor{0.7 \log_2 q}$ are shattered by $\Sq$, while most subsets of size $\floor{0.85 \log_2 q}$ are not shattered. It is not clear from the data if this remains true for larger primes. (See Figure \ref{Fig: interfaces}.)

Finally, in Section~\ref{sec: openproblems}, we discuss some interesting open problems related to our paper.
\subsubsection*{Notation} For $A$,$B$, and $C$ quantities that depend on various parameters with $C \geqs 0$ always, we use the Vinogradov notation $A - B \ll C$ interchangeably with the Bachmann-Landau notation $A = B + O(C)$ to mean that there exists a constant $K > 0$ such that $|A - B| \leqs KC$. We allow the constant $K$ to depend on some parameters, in which case such dependence is denoted by subscripts such as $A \ll_\epsilon C$ or $A = O_\epsilon(C)$.

\subsection*{Acknowledgements} This article was written while the authors were affiliated with the Department of Mathematics, University of Rochester. The authors are grateful for the pleasant work environment provided by the Department.

BM was partially supported by Alex Iosevich's NSF grant DMS-2154232. AS received summer support from the same grant in 2022. ELW was partially supported by NSF grant DMS-2204397, and by the AMS-Simons travel grants. 

All three authors would like to thank Alex Iosevich for encouragement and useful discussions. AS would like to thank Steve Gonek, Will Sawin, and Igor Shparlinski for useful discussions on character sums. Finally, AS would also like to thank Joe Neeman for useful discussions on VC-dimensions in graphs.

\section{Preliminaries from Fourier analysis and algebraic geometry}

%

We review some basic facts about Fourier analysis on finite Abelian groups which may be found in any relevant textbooks, such as \cite{steinshakarchi} or \cite{terras}. Let $G$ be a finite Abelian group written multiplicatively. A \emph{multiplicative character} on $G$ is a homomorphism $\chi : G \to S^1 \subset \C$, i.e.
\[
	\chi(xy) = \chi(x) \chi(y) \qquad \text{ for all } x,y \in G.
\]
The set $\widehat G$ of multiplicative characters on $G$ forms a group under pointwise multiplication, and is called the \emph{Pontryagin dual} of $G$. In general, the characters satisfy the orthogonality relation
\[
	\frac{1}{|G|} \sum_{\chi \in \widehat G} \chi(x) \overline{\chi(y)} =
	\begin{cases}
		1 & \text{ if $x = y$,} \\
		0 & \text{ otherwise.}
	\end{cases}
\]
Now let $H \subseteq G$ be a subgroup and consider the quotient group $G/H = \{xH : x \in G\}$. The dual $\widehat{G/H}$ of the quotient is naturally identified with the set of characters in $\widehat G$ which are constantly $1$ on $H$. The orthogonality relation above applied to the quotient $G/H$ now reads as
\[ 
	\frac{1}{|G/H|}\sum_{\chi \in \GH} \chi(x)\ov{\chi(t)} = \begin{cases} 1 & \text{ if } x \in tH, \\ 0 &\text{ otherwise.} \end{cases} 
\]
Specializing to $G = \Fq^\times$ and $H = \Sqr$, this reads
\begin{equation*}
\1_{t\Sqr}(x) = \frac{1}{r} \sum_{\chi \in \widehat{\Fq^\times/\Sqr}} \chi(x)\ov{\chi(t)},
\end{equation*}
for $x,t \neq 0$. We now extend the domain of $\chi$ to $0$, by defining $\chi(0) = 0$ for nontrivial characters and $\chi_0(0) = 1$ for the trivial character $\chi_0$. If $q$ is a prime and $r = 2$, this is consistent with the definition of the Legendre symbol, which is partially the motivation for our convention. Further, with the fuzzy notion of set membership encoded in \eqref{fuzzy}, the above relation then holds for all $x \in \Fq, t\in \Fq^\times$. Hopefully, this justifies our non-standard choices in this regard.

As remarked in the introduction, since $\Fq^\times$ is cyclic, $\widehat{\Fq^\times/\Sqr} = \gen{\chi_r}$ where $\chi_r$ is a (not necessarily unique) character that has order $r = |\Fq^\times/\Sqr|$ and hence generates the dual group. Thus, one can write the above as 
\begin{equation} \label{orthogonality}
\1_{t\Sqr}(x) = \frac{1}{r} \bigg(1+\sum_{k=1}^{r-1} \chi_r^k(x)\ov{\chi_r^k(t)}\bigg),
\end{equation}
where we have separated the trivial character. 

We will use (as a black box) a deep result from algebraic geometry, namely the Weil bound for character sums.
\begin{lemma}[Weil] \label{lem: weil}
For $r\geqs 2$, suppose that $f \in \Fq[x]$ has $n$ distinct roots and that $f$ is not an $r$th power. Then, we have
\[ \left|\sum_{x\in \Fq} \chi_r(f(x)) \right| \leqs (n-1)\sqrt{q}. \]
\end{lemma}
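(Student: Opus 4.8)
The plan is to deduce the bound from the Riemann hypothesis for curves over finite fields (the Hasse--Weil inequality), which is the one genuinely deep ingredient; everything else is bookkeeping. Write $f = c\prod_{j=1}^{n}(x - a_j)^{m_j}$ with the roots $a_j$ distinct; after a harmless reduction we may take $1 \leqs m_j < r$, and the hypothesis that $f$ is not an $r$th power (more precisely, not a scalar times an $r$th power of a polynomial) then says exactly that the multiplicative character $\chi_r \circ f$ of the rational function field $\Fq(x)$ is geometrically nontrivial. The first step is to realize the sum geometrically via the Kummer curve $C : y^r = f(x)$ over $\Fq$: since $r \mid q-1$ and $\chi_r$ has exact order $r$ with kernel $\Sqr$, for every $a \in \Fq$ the number of $y \in \Fq$ with $y^r = a$ equals the finite geometric sum $\sum_{k=0}^{r-1}\chi_r^k(a)$ --- and, with the paper's conventions $\chi_r^k(0) = 0$ for $k \neq 0$, this stays correct at $a = 0$. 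Summing over $x \in \Fq$,
\[ \#\{(x,y) \in \Fq^2 : y^r = f(x)\} \;=\; \sum_{x\in\Fq}\sum_{k=0}^{r-1}\chi_r^k(f(x)) \;=\; q + \sum_{k=1}^{r-1} S_k, \qquad S_k := \sum_{x\in\Fq}\chi_r^k(f(x)). \]

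The second step is to extract the single term $S_1 = \sum_x \chi_r(f(x))$, and for this I would use the $L$-function of the character $\chi_r \circ f$, namely $L(u) = \exp\!\big(\sum_{m\geqs 1}\tfrac1m\sum_{x\in\F_{q^m}}\chi_r(N_{\F_{q^m}/\Fq}(f(x)))\,u^m\big)$, which is the piece of the zeta function of a smooth model of $C$ carved out by the character $\chi_r$ of the group $\mu_r$ acting by $y \mapsto \zeta y$. Weil's theorem --- ``the Weil bound'' in structural form --- asserts that, because $\chi_r \circ f$ is nontrivial, $L(u)$ is a \emph{polynomial} of degree $n-1$ all of whose reciprocal roots $\alpha_1,\dots,\alpha_{n-1}$ satisfy $|\alpha_i| \leqs \sqrt q$; granting this, reading off the coefficient of $u^1$ in $\log L(u)$ gives $S_1 = -\sum_i \alpha_i$, whence $|S_1| \leqs (n-1)\sqrt q$. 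In the \'etale-cohomology language this is cleanest: $L(u) = \det\!\big(1 - u\,\mathrm{Frob}_q \mid H^1_c(\mathbb{A}^1_{\ov{\Fq}}, \mathcal{L})\big)$ for the rank-one Kummer sheaf $\mathcal{L}$ attached to $\chi_r \circ f$, the groups $H^0_c$ and $H^2_c$ vanish because $\mathcal{L}$ is geometrically nontrivial (this is where ``$f$ not an $r$th power'' is used), $\dim H^1_c = n-1$ follows from the Grothendieck--Ogg--Shafarevich / Euler characteristic formula since $\mathcal{L}$ is tame ($p \nmid r$ as $r \mid q-1$), and $|\alpha_i| \leqs \sqrt q$ is the weight bound, i.e.\ Weil's Riemann hypothesis for curves.

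I expect the genuine obstacle to be that this rests squarely on the Riemann hypothesis for curves, a deep theorem (Weil's original proof, or later the elementary Stepanov--Bombieri method, which recovers the very same bound by constructing a polynomial of controlled degree that vanishes to high order at those $x \in \Fq$ for which $\chi_r(f(x))$ takes a prescribed value and comparing the degree with the count of such $x$); there is no way around importing one of these, which is precisely why the paper quotes the estimate as a black box. Within the reduction, the only step needing real care is the degree count $\dim H^1_c = n-1$ --- getting $n-1$ rather than $n$ or $n-2$ requires correctly accounting for the behaviour of $\chi_r \circ f$ at infinity, which splits into the cases $r \mid \deg f$ and $r \nmid \deg f$, and is least error-prone to run through the Euler characteristic formula (equivalently the conductor--discriminant formula for $\Fq(x)(f^{1/r})/\Fq(x)$) rather than by hand. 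For the write-up I would simply cite it: Weil's original papers, Schmidt's \emph{Equations over Finite Fields}, or Iwaniec--Kowalski's \emph{Analytic Number Theory}.
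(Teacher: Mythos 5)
Your proposal is correct and matches the paper's treatment: the paper gives no proof of this lemma, quoting it as a black box from \cite[Theorem~11.23]{iwanieckowalski}, exactly as you conclude one must. Your supplementary sketch of the standard deduction (Kummer sheaf, vanishing of $H^0_c$ and $H^2_c$ from geometric nontriviality, $\dim H^1_c = n-1$ via Grothendieck--Ogg--Shafarevich for a tame rank-one sheaf, and the weight bound from the Riemann hypothesis for curves) is accurate, including the correct degree count $n-1$.
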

We refer the reader to \cite[Chapter~11]{iwanieckowalski} for an account of this result which assumes minimal knowledge of algebraic geometry. In particular, our lemma as stated is a special case of \cite[Theorem~11.23]{iwanieckowalski}. 

We apply the Weil bound to prove the following lemma that attests to the pseudorandomness of bounded index subgroups $\Sqr \subseteq \Fq^\times$.
 
\begin{lemma} \label{lem: main}
Let $r \geqs 2$, $n\geqs 1$, $Y = \{y_1,\cdots,y_n\} \subseteq \Fq$ and $\bt = (t_1,\cdots,t_n) \in (\Fq^\times)^n$. Then,
\[ \Pr_{x \in \Fq}\Big[y_j - x \in t_j\Sqr, 1\leqs j \leqs n\Big] = \frac{1}{r^n} + O\Big(\frac{n}{q^{1/2}}\Big), \]
where the implicit constant can be chosen to be $1$ and hence is uniform in all parameters, and $x$ is sampled from the uniform distribution. \end{lemma}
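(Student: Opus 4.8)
The plan is to express the probability as a normalized complete sum over $x \in \Fq$, expand each coset indicator using the orthogonality relation \eqref{orthogonality}, and then estimate the resulting complete multiplicative character sums with the Weil bound (Lemma~\ref{lem: weil}). The square-root cancellation provided by Weil is the entire substance of the estimate; everything else is bookkeeping with the conventions already fixed in this section (the fuzzy membership \eqref{fuzzy} and $\chi_r(0) = 0$), which have been arranged precisely so that this argument goes through cleanly.

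\textbf{Step 1 (reduction to character sums).} Interpreting membership via \eqref{fuzzy}, I would first write
\[ \Pr_{x\in\Fq}\big[y_j - x \in t_j\Sqr,\ 1\leqs j\leqs n\big] = \frac1q\sum_{x\in\Fq}\prod_{j=1}^n \1_{t_j\Sqr}(y_j - x). \]
Rewriting \eqref{orthogonality} as $\1_{t\Sqr}(x) = \frac1r\sum_{k=0}^{r-1}\chi_r^k(x)\,\ov{\chi_r^k(t)}$ (legitimate since $\chi_r^0$ is the trivial character $\chi_0$ with $\chi_0(0) = 1$), substituting into each of the $n$ factors, and expanding the product yields
\[ \frac{1}{r^n}\sum_{\bk\in\{0,\dots,r-1\}^n}\Bigg(\prod_{j=1}^n \ov{\chi_r^{k_j}(t_j)}\Bigg)\cdot\frac1q\sum_{x\in\Fq}\prod_{j=1}^n\chi_r^{k_j}(y_j - x). \]
Since $\chi_r(0) = 0$ and $\chi_r$ is multiplicative on $\Fq^\times$, a short check (treating $x \in Y$ separately, using that the $y_j$ are distinct because $|Y| = n$) shows $\prod_{j=1}^n\chi_r^{k_j}(y_j - x) = \chi_r(f_{\bk}(x))$, where $f_{\bk}(x) = \prod_{j=1}^n(y_j - x)^{k_j}$.

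\textbf{Step 2 (main term and Weil).} The tuple $\bk = \bz$ contributes exactly $r^{-n}$, since then the inner sum is $\frac1q\sum_{x\in\Fq}1 = 1$. For $\bk \neq \bz$, the polynomial $f_{\bk}$ is nonconstant; its distinct roots are exactly those $y_j$ with $k_j \neq 0$, hence at most $n$ of them; and $f_{\bk}$ is not an $r$th power, because each nonzero exponent satisfies $1 \leqs k_j \leqs r-1$ and so is not divisible by $r$. Lemma~\ref{lem: weil} therefore gives $\big|\sum_{x\in\Fq}\chi_r(f_{\bk}(x))\big| \leqs (n-1)\sqrt q$.

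\textbf{Step 3 (assembling the error, and remarks).} Each $t_j \neq 0$, so $\ov{\chi_r^{k_j}(t_j)}$ is a root of unity and $\big|\prod_j\ov{\chi_r^{k_j}(t_j)}\big| = 1$; there are $r^n - 1$ tuples $\bk \neq \bz$. Combining,
\[ \Bigg|\Pr_{x\in\Fq}\big[y_j - x \in t_j\Sqr,\ 1\leqs j\leqs n\big] - \frac1{r^n}\Bigg| \leqs \frac{r^n-1}{r^n}\cdot\frac{(n-1)\sqrt q}{q} < \frac{n}{q^{1/2}}, \]
which is the claimed estimate, with implicit constant $1$ (indeed one gets the slightly sharper $\frac{n-1}{\sqrt q}$). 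I do not anticipate a real obstacle: the only delicate points are the extension of the orthogonality relation and of the multiplicativity of $\chi_r$ across $0$, and the verification that $f_{\bk}$ is not an $r$th power — all of which are routine given the conventions already in place — while the genuinely nontrivial input, the cancellation in $\sum_x \chi_r(f_{\bk}(x))$, is invoked as a black box from algebraic geometry.
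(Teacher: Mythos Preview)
Your proof is correct and follows essentially the same approach as the paper: expand the product of coset indicators via the orthogonality relation \eqref{orthogonality}, isolate the $\bk = \bz$ main term, and control each remaining character sum $\sum_{x}\chi_r(f_{\bk}(x))$ by the Weil bound after checking that $f_{\bk}$ has at most $n$ distinct roots and is not an $r$th power. The only cosmetic difference is that the paper bounds the error by passing to the maximum over $\bk \neq \bz$ (using that there are fewer than $r^n$ terms), whereas you sum over all $r^n - 1$ tuples and use $(r^n-1)/r^n < 1$; both routes yield the same $n/q^{1/2}$ with implicit constant $1$.
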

Note that for a fixed $j, y_j, t_j$, 
\[ \Pr_{x\in\Fq}\Big[y_j - x \in t_j\Sqr\Big] = \frac{1}{r}, \] 
since $y_j - x$ is also uniform in $\Fq$, and a coset of $\Sqr$ has measure $1/r$. Thus, the above lemma can be viewed as saying that for a fixed choice of $n$, the values of $x - y_j$ all equidistribute in the cosets of $\Sqr$ independently of each other as $q \to \infty$. This is what we mean by the pseudorandomness of bounded index subgroups.

\begin{proof}[Proof of Lemma~\ref{lem: main}]
For conciseness, let $E \subseteq \Fq$ be the event in the lemma (namely, that $y_j - x \in t_j \Sqr$ for every $1\leqs j \leqs n$). Further, for $1\leqs j \leqs n$, define $E_j \subseteq \Fq$ to be the event that $y_j - x \in t_j \Sqr$. Then, clearly $E = \cap_j E_j$, and hence
\[ \1_E(x) = \prod_{j=1}^n \1_{E_j}(x) = \prod_{j=1}^n \1_{t_j\Sqr}(y_j - x). \]

We now use \eqref{orthogonality} to Fourier-expand each indicator function in the right-most expression, obtaining

\[ \frac{1}{r^n} \prod_{j=1}^n \bigg(1 + \sum_{k=1}^{r-1} \chi_r^k(y_j-x)\ov{\chi_r^k(t_j)}\bigg).\]
Expanding out the product, we find that a convenient index for the resulting summand is $\bk = (k_1,\cdots,k_n)$, where $k_j$ is the choice of $k$ for a given $j$ taken from the inner sum. Thus,
\[ \1_E(x) = \frac{1}{r^n} \bigg(1 +\sum_{\substack{0\leqs \bk \leqs r-1\\\bk \neq \bz}}\prod_{j=1}^n \chi_r^{k_j}(y_j - x)\ov{\chi_r^{k_j}(t_j)}\bigg), \]
where $0 \leqs \bk \leqs r-1$ means that $0\leqs k_j \leqs r-1$ for each $1\leqs j \leqs n$, and $\bk \neq \bz$ means that $k_j$ is not identically $0$. 

Recall that $\chi_r$ is a multiplicative character, and hence that $\chi_r^k(t) = \chi_r(t^k)$, and further that $\chi_r(x)\chi_r(x') = \chi_r(xx')$. In particular, if we define
\[ b(\bk,\bt) = \ov{\chi_r}\bigg(\prod_{j=1}^n t_j^{k_j}\bigg),\] 
and
\[ f_\bk(x) = \prod_{j=1}^n (y_j - x)^{k_j}, \]
then the above can be written as
\[ \1_E(x) = \frac{1}{r^n}\bigg(1 + \sum_{\substack{0\leqs \bk \leqs r-1\\\bk \neq \bz}} b(\bk,\bt)\chi_r(f_\bk(x))\bigg).\]
Now, we find by linearity of expectation that
\[ \Pr_{x \in \Fq}[E] = \E_{x} [\1_{E}(x)] = \frac{1}{r^n} + \frac{1}{r^n} \sum_{\substack{0\leqs \bk \leqs r-1\\\bk \neq \bz}} b(\bk,\bt)\E_{x} [\chi_r(f_\bk(x))]. \]
Here the expectation is over $x \in \Fq$. Now, $|b(\bk,\bt)|\leqs 1$. Thus, by the triangle inequality, 
\begin{equation*}\begin{split} 
\bigg(\Pr_{x \in \Fq}[E] - \frac{1}{r^n}\bigg) 
& \leqs \frac{1}{r^n}\sum_{\substack{0\leqs \bk \leqs r-1\\\bk \neq \bz}} \bigg|\E_{x} [\chi_r(f_\bk(x))]\bigg| 
\\
& \leqs \max_{\substack{0\leqs \bk \leqs r-1\\\bk \neq \bz}} \bigg|\E_{x} [\chi_r(f_\bk(x))]\bigg|.
\end{split} \end{equation*}

It now suffices to show that uniformly for $0 \leqs \bk \leqs r-1, \bk \neq \bz$, we have the bound
\[ \E_{x} [\chi_r(f_\bk(x))] \leqs \frac{n}{q^{1/2}}.\]
This follows by multiplying the Weil bound (Lemma~\ref{lem: weil}) by $1/q$ on both sides. To see this, note that $f_\bk \in \Fq[x]$ is clearly not an $r$th power, since $k_j < r$ for each $j$, and further note that $f_\bk$ has at most $n$ distinct roots, as its roots are contained in $Y = \{y_1,\cdots,y_n\}$. This completes the proof.
\end{proof}

\section{Proof of Theorem~\ref{thm: main gen}}
Let $\epsilon > 0$, and 
\begin{equation} \label{m bound} n \leqs \Big(\frac{1}{2} - \epsilon\Big) \log_r q. \end{equation}
We will first show that for $q$ large enough in terms of $r$ and $\epsilon$, this will imply that
\begin{equation} \label{probability > 0} \Pr_{x \in \Fq}\Big[y_j - x \in t_j\Sqr, 1\leqs j \leqs n\Big] > 0. \end{equation}
uniformly in $Y = \{y_1,\cdots,y_n\}$ and $\bt \in (\Fq^\times)^n$. By Lemma~\ref{lem: main}, we see that the probability above is at least
\[ \frac{1}{r^n} - \frac{n}{q^{1/2}} \geqs \frac{1}{q^{1/2 - \epsilon}} - \frac{\log_r q}{q^{1/2}}. \]
Since $r \ll 1$, $\log_r q = o(q^{2\epsilon})$ as $q \to \infty$. Thus,
\[ \Pr_{x \in \Fq}\Big[y_j - x \in t_j\Sqr, 1\leqs j \leqs n\Big] \gg q^{-1/2+\epsilon} > 0, \]
as claimed.

To prove the theorem, it clearly suffices to show that for any set of size $n$ satisfying \eqref{m bound}, $Y$ is shattered by $\Sqr$. Let $Y = \{y_1,\cdots,y_n\}$. To prove that $Y$ is shattered by $\Sqr$, we need to find, for every $\emptyset \subseteq A \subseteq Y$, an element $x \in \Fq$ with the property that
\[
	A = Y \cap (\Sqr + x).
\]

We do this as follows. Fix $t \in \Fq^\times \setminus \Sqr$. Now, define $\bt = (t_1,\cdots,t_n)$ by
\[ t_j = \begin{cases} 1 & \text{ if } y_j \in A, \\ t & \text{ if } y_j \notin A. \end{cases} \]

Applying \eqref{probability > 0} to this choice of $\bt$, we find that, since this probability is non-zero, there must be an $x \in \Fq$ with the property that $y_j - x \in t_j\Sqr$ for each $1\leqs j\leqs n$. In particular, since $t \notin \Sqr$, this means that $y_j - x \notin \Sqr$ when $t_j = t$ and $y_i - x \in \Sqr$ when $t_j = 1$. Thus, $y_j \in (\Sqr + x)$ if and only if $t_j = 1$, which by construction, happens if and only if $y_j \in A$. Since this is true for any choice of $A$, we have shown that $Y$ is shattered by $\Sqr$, thus obtaining Theorem~\ref{thm: main gen}.

\section{Numerical evidence for Conjecture~\ref{conj: main squares}} \label{sec: numerical}
First, we prove Proposition~\ref{prop: invariant}. Some care must be taken to resolve issues surrounding whether $0 \in \Sqr$ -- for example, by recalling the modified definition of shattering proposed in the introduction. 
\begin{proof}[Proof of Proposition~\ref{prop: invariant}]
The invariance under translations is trivial, since the set of translates of $\Sq$ is obviously invariant under translations. A similar argument shows that if $a \in \Sq$, then $aY$ is shattered by $\Sq$ if and only if $Y$ is shattered by $\Sq$. 

It remains to show that if $a \in \Fq^\times \setminus \Sq$, the $aY$ is shattered by $\Sq$ if and only if $Y$ is shattered by $\Sq$.

We suppose $Y = \{y_1,\cdots,y_n\}$ and let $\bt \in \{0,1\}^n$ be a bit-string. We say that $Y$ realizes $\bt$ if there is a translate $x\in \Fq$ so that
\[ y_j \in (\Sq + x) \iff t_j = 1. \]
We see that an equivalent way to saying that $Y$ is shattered by $\Sq$ is that $Y$ realizes every $\bt \in \{0,1\}^n$. Let $\bt' = (1,1,\cdots,1) - \bt$. Then we see that
\begin{equation*} Y \text{ realizes } \bt \iff aY \text{ realizes } \bt', \end{equation*}
since if $x\notin Y$ is the translation such that $\bt = \1_{Y \cap (\Sq + x)}$, then $ax \notin aY$ is a translation for which $\bt' = \1_{a Y \cap (\Sq + ax)}$. 
If $Y$ is shattered by $\Sq$, then every choice of $\bt \in \{0,1\}^n$ is realized by $Y$. But $\bt'$ runs over all of $\{0,1\}^n$ in that case, thus establishing that $aY$ is shattered by $\Sq$, as desired.
\end{proof}

We now discuss the three algorithms described in the introduction in order. Preceding this, however, we discuss our code that detects if a given subset is shattered $\Sq$ that is common to all algorithms.

\subsection{Detecting if a subset is shattered} \label{exp: basic}

Common to all code is a way to determine if a subset $Y \subset \mathbb F_q$ of size $n$ is shattered by the shifts of the squares $\Sq$. This is implemented in such a way to replace loops over the elements of $\F_q$ with vectorized operations. To implement this, we define a $q \times n$ matrix $A$ whose entries are $1$'s and $0$'s such that
\[
	A(x,i) = \begin{cases}
		1 & \text{ if } y_i - x \in \Sq \\
		0 & \text{ otherwise}
	\end{cases}
	= \1_{y_i - \Sq}(x) = \1_{x + \Sq}(y_i).
\]
Note, the $i$th column of $A$ reads as the indicator function of the reflected set of squares $-\Sq$ after translating by $y_i$, and the $x$th row of $A$ reads as the restriction of the shifted set of squares $x + \Sq$ to the set $Y = \{y_1,\ldots, y_n\}$. Hence, if we are to initialize such a matrix in Julia, we would first initialize the reflected set of squares modulo $q$, and then set the columns of $A$ as follows.
\begin{verbatim}
    using CircularArrays
    
    # ...

    nSq = CircularArray(zeroes(Int64, q))
    for x in 0:(q-1)/2
        nSq[-x^2] = 1
    end

    # ...

    A = Array{Int64}(undef, q, n)
    for i in 1:n
        A[:,i] = nSq[Y[i]:Y[i]+q-1]
    end
\end{verbatim}
The set $Y$ is shattered by shifts of $\Sq$ if and only if every length-$n$ string of $1$'s and $0$'s is realized as a row in $A$. To detect this, we read a row as if it were a number written in binary, except we read the digits in reverse order so that the first element is the $1$'s digit, the second the $2$'s digit, the third the $4$'s digit, and so on (the reversed order has some practical benefit for the algorithm in Section~\ref{exp: arithmetic}). We can carry this operation out for all rows with a single matrix multiplication:
\[
	A \begin{bmatrix}
		1 \\
		2 \\ 
		4 \\
		\vdots \\
		2^{n-1}
	\end{bmatrix}.
\]
The set $Y$ is shattered then if every number from $0$ to $2^n - 1$ appears as an entry in the resulting vector. We can get Julia to return ``true" if $Y$ is shattered and ``false" if $Y$ is not shattered by writing:
\begin{verbatim}
    restrictions = CircularArray(falses(2^n))
    for k in A * [2^i for i in 0:n-1]
        restrictions[k] = true
    end
    
    return all(restrictions)
\end{verbatim}

\subsection{Computing the VC-dimension} \label{exp: bruteforce} This experiment computes the VC-dimension of $\Sq$ for primes $q$ in a user-defined range. Broadly speaking, this is accomplished by performing an exhaustive search through the subsets of $\F_q$ for the largest shattered by translates of $\Sq$. The search is slightly smarter than brute-force, and is described below.

The code finds the VC-dimension of $\Sq$ for primes $q$ which are $5$ or greater. One quickly sees that the VC-dimension of $\Sq$ for such $q$ is at least $2$-- $\{0,1\}$ is always shattered. Due to Proposition~\ref{prop: invariant}, it is enough to search over only those subsets which contain $0$ and $1$, as one can always find an appropriate affine transformation.

The set $\{0,1\}$ will serve as the root of the tree of subsets over which we will search. A set $Y$ on the tree will have parent $Y \setminus \max Y$ after identifying $\F_q$ with the integers $\{0,1,\ldots, q-1\}$. Note, a set $Y$ is the ancestor to exactly $q - 1 - \max Y$ generations. To search for the largest shattered set, we start with the following iterative scheme which we will refine momentarily:
\begin{enumerate}
\item Determine if $Y$ is shattered. If so, record the size of $Y$ if it is the largest so far. If $Y$ is shattered and has a child, go to step (2). Otherwise, go to step (3).
\item Take the first child $Y \cup \{\max Y + 1\}$ of $Y$ and assign it the name $Y$. Go to step (1).
\item If $Y$ has a next sibling, namely $Y$ with $\max Y$ removed and $\max Y + 1$ included, then assign it the name $Y$ and go to step (1). Otherwise, go to step (4).
\item If $Y$ is not the root $\{0,1\}$, take its parent $Y \setminus \max Y$ and assign it the name $Y$ and go to step (3). If $Y$ is the root, exit.
\end{enumerate}
This iterative scheme certainly gets the job done, but it can be improved in two ways at relatively little expense.

First, we need not search through the descendants of $Y$ if there are none whose size exceeds the size of the largest shattered set found so far. This is an easy check, since the largest descendant of $Y$ has size $|Y| + q - 1 - \max Y$, the size of $Y$ plus the number of generations following it.

The second refinement uses what we call the \emph{shattering index} of $Y$ in the code. This is determined by first tallying the number of times each restriction is realized, taking the minimum of these numbers, and then taking the base-2 logarithm of that. We can get Julia to return the shattering index of $Y$ by setting up the matrix $A$ as before and writing:
\begin{verbatim}
    restrictions = CircularArray(zeros(2^n))
    for k in A * [2^i for i in 0:n-1]
        restrictions[k] += 1
    end
    
    min = minimum(restrictions)
    
    if min > 0
        return floor(Int64, log2(min))
    else
        return -1
    end
\end{verbatim}
The $-1$ value here is used as a flag to indicate that our set is not shattered. If $Y$ has shattering index $k$, then any superset of size $|Y| + k + 1$ necessarily fails to be shattered. Hence, we should only search through the descendants of sets $Y$ for which $|Y| + k$ exceeds the size of the largest shattered set found so far.

\subsection{Size of the largest shattered arithmetic sequence} \label{exp: arithmetic} In this experiment, we compute the largest value $n \leqs \log_2 q$ such that $Y = \{0,1,\cdots,n-1\} \subseteq \Fq$ is shattered by $\Sq$ for primes $q$ in a user-defined range. 

The motivation for this experiment comes from a naive guess that perhaps a good candidate for a large shattered subset of $\Fq$ for $q$ prime is a long arithmetic progression. At this point, we see that from Proposition~\ref{prop: invariant} it follows that if an arithmetic progression of length $n$ is shattered, then so is the set $\{0,1,\cdots, n-1\}$.

To find the largest $n$ for which $Y = \{0,1,\cdots, n-1\}$ is shattered by $\Sq$, the code proceeds by as described in Section~\ref{exp: basic} with $n = \lfloor \log_2 q\rfloor$ (i.e., with the theoretical maximum value it could be). The main time-saving innovation here is that it suffices to do this computation only for this value of $n$. This is achieved by the observation that if $\{0,1,\cdots,n-1\}$ is not shattered, then the restrictions matrix computed when checking already contains the information about whether $\{0,1,\cdots,n-2\}$ is shattered or not. Let $R$ be this restriction matrix, which has dimensions $2^n \times 1$. Let $R_1$ and $R_2$ be the $2^{n-1} \times 1$ matrices such that
\[ R = \begin{bmatrix} R_1 \\ R_2 \end{bmatrix}. \]
Now, let $R'$ be the $2^{n-1} \times 1$ matrix obtained by applying the component-wise logical-OR operation to $R_1$ and $R_2$. Then $R'$ is the restriction matrix that is obtained when determining if $\{0,1,\cdots,n-2\}$ is shattered. 

This is best illustrated by an example. Suppose $n = 6$. Then, the $3$rd element of $R$ represents whether the bit-string $110000$ can be achieved as the indicator function of $\{0,\cdots,5\} \cap (\Sq + x)$ for some $x$ as $1 + 2 = 3$. Similarly, the $19$th element of $R$ represents whether the bit-string $110001$ can be achieved as $1 + 2 + 16 = 19$.

Thus, if we take the logical-OR of these two Boolean values, we get a bit which represents whether $11000$ can be achieved as the indicator function for $\{0,\cdots,4\} \cap (\Sq + x)$ for some $x$, as we are essentially neglecting to care what happens to $5$. In particular, this will be stored in the $3$rd position in $R'$, which lines up with the fact that $1 + 2 = 3$. 

We see that this process is recursive, and thus, once we have computed the restriction matrix for $n = \lfloor \log_2 q\rfloor$, we can easily find the actual largest value of $n$ that works using a simple loop, given below:

\begin{verbatim}
    while !all(restrictions[0:2^n-1])
        restrictions[0:2^(n-1)-1] 
            = restrictions[0:2^(m-1) - 1] .| 
              restrictions[2^(n-1) : 2^n - 1]
        n -= 1
    end
    
    return n
\end{verbatim}

\subsection{Probabilities that a set is shattered} \label{exp: probabilistic}

	\begin{figure}
	\includegraphics[width=0.45\textwidth]{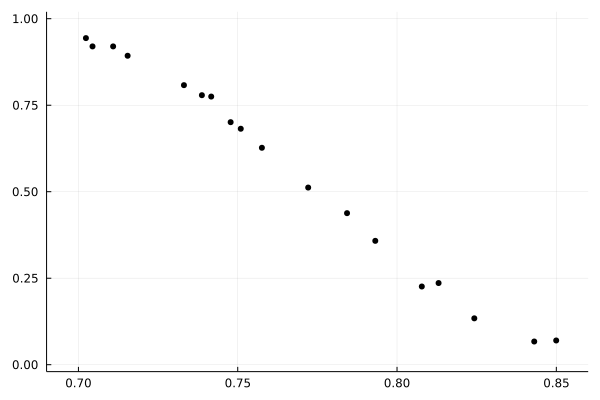} 
	\includegraphics[width=0.45\textwidth]{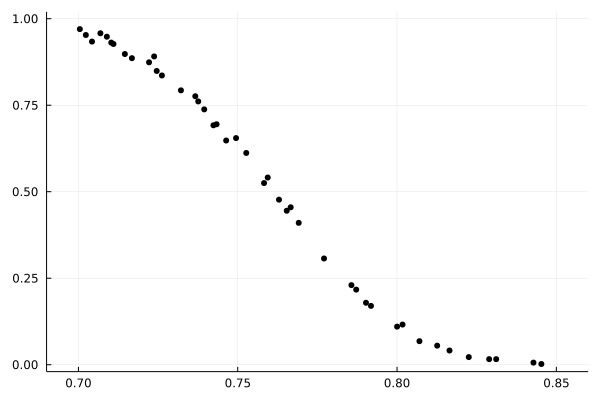} \\
	\includegraphics[width=0.45\textwidth]{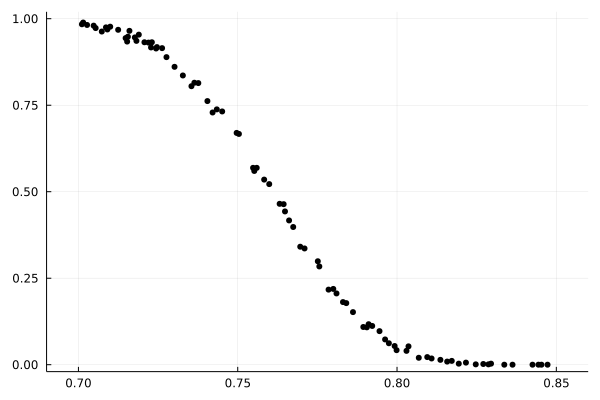} 
	\includegraphics[width=0.45\textwidth]{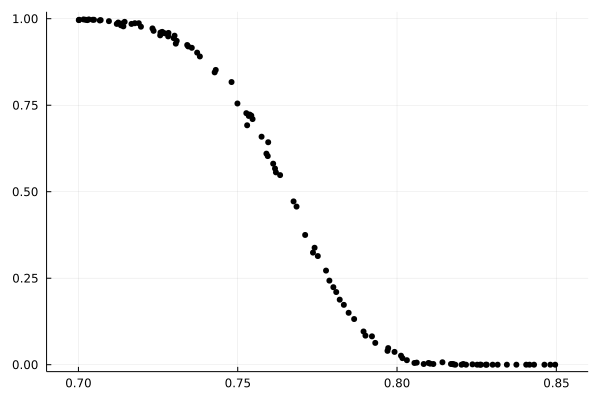} \\
	\includegraphics[width=0.45\textwidth]{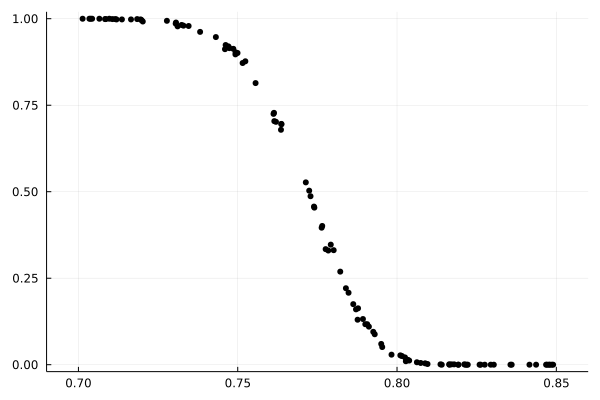} 
	\includegraphics[width=0.45\textwidth]{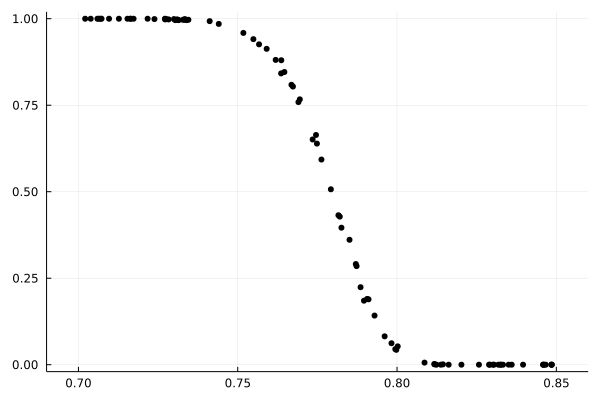} \\
	\includegraphics[width=0.45\textwidth]{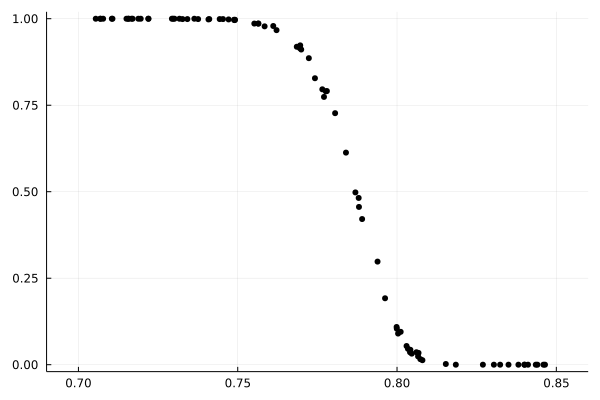} 
	\includegraphics[width=0.45\textwidth]{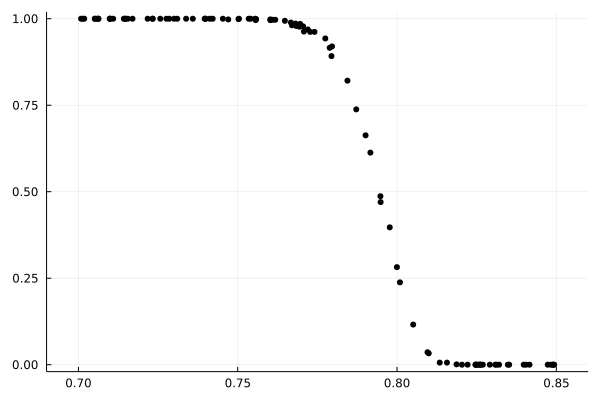} \\
	\caption{Along the vertical axis is probability. Along the horizontal axis is the proportion $n/\log_2(q)$ and ranges in $[0.7, 0.85]$. From left to right starting at the top, the plots take $n$ to be at $5,\ldots,12$, respectively, and the series in each plot is generated as $q$ ranges such that $0.7 \leqs n/\log_2q \leqs 0.85$. Primes $q$ were selected randomly so that the expected number of values of $q$ per plot is about $100$.}
	\label{Fig: interfaces}
	\end{figure}

This experiment determines, up to statistical error, the likelihood that a random subset of $\F_q$ of size $n$ is shattered by $\Sq$. In particular, for each $n$ in a user-defined range, we draw $1000$ subsets of size $n$ from $\F_q$ (pseudo-)randomly and uniformly and determine the proportion of them which are shattered. We then record the datum
\[
	\left(\frac{n}{\log_2 q} , \frac{\# \text{ shattered}}{1000} \right).
\]
We generate a plot of probabilities by fixing $n$ and ranging over primes $q$ in $[2^{n/0.85}, 2^{n/0.7}]$ (see Figure \ref{Fig: interfaces}).

Plotting the recorded data on the axes reveals a smooth-looking profile of shattering probabilities which seems to be monotone decreasing from nearly $1$ to nearly $0$ on the vertical axis as we range from $0.7$ to $0.85$ on the horizontal axis. There also seems to be an interface that simultaneously scales thinner and shift rightwards as $n$ increases. It would be interesting to investigate (1) the position of the interface as $n \to \infty$, (2) the scale of the interface as $n \to \infty$, and (3) the structure of the interface after an appropriate rescaling. We anticipate that this would require a combination of techniques from both algebraic geometry and probability. It would follow that nearly every subset in a relatively large range of sizes is shattered by $\Sq$, rather than just those close to $\frac 12 \log_2 q$.

\section{Directions for future research} \label{sec: openproblems}

The following are a list of open problems related to our results. We would be interested in a solution for any of these.

\subsection{Sets which are not shattered}
For any $\delta>0$, is it always the case that for $q$ large enough, there is a set $Y = \{y_1,\cdots,y_n\}$ with $n = |Y| \geqs (\tfrac12+\delta) \log_2 q$ such that $\Sq$ does not shatter? It seems plausible that this is true, which would imply, together with Theorem~\ref{thm: main squares} that $\ul{\beta} = 1/2$. It is unclear whether the right approach to such an endeavour would be algebraic geometric or probabilistic. 

The obvious analogue for $\Sqr$ is also interesting, and in particular, it seems unclear what the right value for $\lbr$ must be. A concrete question of interest to us is where $\lbr \to 0$ as $r \to \infty$; we remark that our lower bound in Theorem~\ref{thm: main gen} has this property, but it is not clear if the true value of $\lbr$ satisfies this as well.

\subsection{The probabilistic interface}

For a fixed $q$ and $n$, let $p_{q,n}$ denote the probability that a random $Y \subseteq \F_q$, $|Y| = n$ selected with uniform probability is shattered by $\Sq$. The experiments suggest there may be an interface over which $p_{q,n}$ goes from essentially $1$ to essentially $0$. To put it precisely, let
\begin{align*}
	\ul\gamma &= \sup \left\{ t \in [0,1] : \liminf_{q \to \infty} \inf_{n \leqs t \log_2 q} p_{q,n} = 1 \right\}, \\
	\ol \gamma &= \inf \left\{ t \in [0,1] : \limsup_{q \to \infty} \sup_{n \geqs t \log_2 q} p_{q,n} = 0 \right\}.
\end{align*}
It is difficult to ascertain the behavior of $\underline \gamma$ and $\overline \gamma$; we can only say for certain that
\[
	\ul{\beta} \leqs \ul{\gamma} \leqs\ol{\gamma} \leqs\ol{\alpha}.
\]
It seems reasonable to conjecture that $p_{q,n}$ is monotone-decreasing with $n$, from which we would be able to conclude
\[
	\ol{\beta} \leqs\ul{\gamma} \leqs\ol{\gamma} \leqs\ul{\alpha}.
\]
It also seems likely from the experiment in Section \ref{exp: probabilistic} that the scale of the interface tends to $0$ as $q \to \infty$, that is, $\ul{\gamma} = \ol{\gamma}$. It would be interesting to understand the scale, position, and structure of this interface as $q \to \infty$.

\subsection{VC-dimension of random graphs}

Since $\Pq$ is a pseudorandom graph, it is a deterministic approximation to an Erd\H{o}s-R\'enyi random graph $\ER(n,p)$ with $n = q$ and $p = 1/2$. This raises the question of what the VC-dimension of an Erd\H{o}s-R\'enyi random graph behaves like. This problem has been studied in the sparse setting as $n \to \infty$ with $p = p(n) = o(1)$ by Anthony, Brightwell, and Cooper \cite{VCdimrandomgraph}. However, there appear to be no results in the literature in the dense regime where $p$ is constant as $n \to \infty$.

An algebraic variant of the above which is perhaps of more relevance is the problem of determining the VC-dimension of a random Cayley (di)graph, $\RC(G,p)$. Here, for a fixed group $G$, we pick a set $S$ where independently for each $x \in G$, we have $x \in S$ with probability $0\leqs p\leqs 1$. Then, the graph we consider is $\Cay(G,S)$. 

One could also use other random graph models instead of $\ER(n,p)$ and $\RC(G,p)$ -- we would be interested in any such results, as it may shed light on whether Conjecture~\ref{conj: main squares} is correct.

\bibliography{vcdim}

\begin{thebibliography}{10}

\bibitem{alonfischernewman}
Noga Alon, Eldar Fischer, and Ilan Newman.
\newblock Efficient testing of bipartite graphs for forbidden induced
  subgraphs.
\newblock {\em SIAM J. Comput.}, 37(3):959--976, 2007.

\bibitem{alonfoxzhao}
Noga Alon, Jacob Fox, and Yufei Zhao.
\newblock Efficient arithmetic regularity and removal lemmas for induced
  bipartite patterns.
\newblock {\em Discrete Anal.}, pages Paper No. 3, 14, 2019.

\bibitem{VCdimrandomgraph}
Martin Anthony, Graham Brightwell, and Colin Cooper.
\newblock The {V}apnik-{C}hervonenkis dimension of a random graph.
\newblock volume 138, pages 43--56. 1995.
\newblock 14th British Combinatorial Conference (Keele, 1993).

\bibitem{REUpaper}
Ruben Ascoli, Livia Betti, Justin Cheigh, Alex Iosevich, Ryan Jeong, Xuyan Liu,
  Brian McDonald, Wyatt Milgrim, Steven~J. Miller, Francisco~Romero Acosta, and
  Santiago~Velazquez Iannuzzelli.
\newblock {VC}-dimension and {D}istance {C}hains in $\mathbb{F}_q^d$.
\newblock {\em preprint, arXiv:2210.03058}, 2023.

\bibitem{REUpaper2}
Ruben Ascoli, Livia Betti, Justin Cheigh, Alex Iosevich, Ryan Jeong, Xuyan Liu,
  Brian McDonald, Wyatt Milgrim, Steven~J. Miller, Francisco~Romero Acosta, and
  Santiago~Velazquez Iannuzzelli.
\newblock {VC}-dimension of {H}yperplanes over {F}inite {F}ields.
\newblock {\em preprint, arXiv:2307.10425}, 2023.

\bibitem{julialanguage}
Jeff Bezanson, Alan Edelman, Stefan Karpinski, and Viral~B. Shah.
\newblock Julia: A fresh approach to numerical computing.
\newblock {\em SIAM Review}, 59(1):65--98, 2017.

\bibitem{weil3}
Andreas Blass, Geoffrey Exoo, and Frank Harary.
\newblock Paley graphs satisfy all first-order adjacency axioms.
\newblock {\em J. Graph Theory}, 5(4):435--439, 1981.

\bibitem{weil2}
B\'{e}la Bollob\'{a}s and Andrew Thomason.
\newblock Graphs which contain all small graphs.
\newblock {\em European J. Combin.}, 2(1):13--15, 1981.

\bibitem{sumproductBKT}
J.~Bourgain, N.~Katz, and T.~Tao.
\newblock A sum-product estimate in finite fields, and applications.
\newblock {\em Geom. Funct. Anal.}, 14(1):27--57, 2004.

\bibitem{quasirandomZnCG}
F.~R.~K. Chung and R.~L. Graham.
\newblock Quasi-random subsets of {$Z_n$}.
\newblock {\em J. Combin. Theory Ser. A}, 61(1):64--86, 1992.

\bibitem{pseudorandomgraphsCGW}
F.~R.~K. Chung, R.~L. Graham, and R.~M. Wilson.
\newblock Quasi-random graphs.
\newblock {\em Combinatorica}, 9(4):345--362, 1989.

\bibitem{weil6}
Fan Chung, Ronald Graham, and Tom Leighton.
\newblock Guessing secrets.
\newblock {\em Electron. J. Combin.}, 8(1):Research Paper 13, 25, 2001.

\bibitem{weil7}
Ivan~Bjerre Damg{\aa}rd.
\newblock On the randomness of {L}egendre and {J}acobi sequences.
\newblock In {\em Advances in cryptology---{CRYPTO} '88 ({S}anta {B}arbara,
  {CA}, 1988)}, volume 403 of {\em Lecture Notes in Comput. Sci.}, pages
  163--172. Springer, Berlin, 1990.

\bibitem{davenportpseudorandomness}
H.~Davenport.
\newblock On character sums in finite fields.
\newblock {\em Acta Math.}, 71:99--121, 1939.

\bibitem{weil8}
H.~Davenport and P.~Erd\"{o}s.
\newblock The distribution of quadratic and higher residues.
\newblock {\em Publ. Math. Debrecen}, 2:252--265, 1952.

\bibitem{circlesFIMW}
D.~Fitzpatrick, A.~Iosevich, B.~McDonald, and E.~Wyman.
\newblock The {VC}-dimension and point configurations in $\mathbb{F}_q^2$.
\newblock {\em preprint, arXiv:2108.13231}, 2021.

\bibitem{foxpachsuk1}
Jacob Fox, J\'{a}nos Pach, and Andrew Suk.
\newblock Erd{\H{o}}s-{H}ajnal conjecture for graphs with bounded
  {VC}-dimension.
\newblock {\em Discrete Comput. Geom.}, 61(4):809--829, 2019.

\bibitem{foxpachsuk2}
Jacob Fox, J\'{a}nos Pach, and Andrew Suk.
\newblock Bounded {VC}-dimension implies the {S}chur-{E}rd{\H{o}}s conjecture.
\newblock {\em Combinatorica}, 41(6):803--813, 2021.

\bibitem{weil1}
R.~L. Graham and J.~H. Spencer.
\newblock A constructive solution to a tournament problem.
\newblock {\em Canad. Math. Bull.}, 14:45--48, 1971.

\bibitem{dotproductIMS}
A.~Iosevich, B.~McDonald, and M.~Sun.
\newblock Dot products in $\mathbb{F}_q^3$ and the {V}apnik-{C}hervonenkis
  dimension.
\newblock {\em preprint, arXiv:2203.03046}, 2022.

\bibitem{iwanieckowalski}
Henryk Iwaniec and Emmanuel Kowalski.
\newblock {\em Analytic number theory}, volume~53 of {\em American Mathematical
  Society Colloquium Publications}.
\newblock American Mathematical Society, Providence, RI, 2004.

\bibitem{weil4}
Gareth~A. Jones.
\newblock Infinite {P}aley graphs.
\newblock {\em Art Discrete Appl. Math.}, 4(2):Paper No. 2.05, 10, 2021.

\bibitem{lovaszegedy}
L\'{a}szl\'{o} Lov\'{a}sz and Bal\'{a}zs Szegedy.
\newblock Regularity partitions and the topology of graphons.
\newblock In {\em An irregular mind}, volume~21 of {\em Bolyai Soc. Math.
  Stud.}, pages 415--446. J\'{a}nos Bolyai Math. Soc., Budapest, 2010.

\bibitem{MSSSquadratic}
Bernard Mans, Min Sha, Igor~E. Shparlinski, and Daniel Sutantyo.
\newblock Functional graphs of families of quadratic polynomials.
\newblock {\em preprint, arXiv:2208.01885}, 2022.

\bibitem{weil5}
Ren\'{e} Peralta.
\newblock On the distribution of quadratic residues and nonresidues modulo a
  prime number.
\newblock {\em Math. Comp.}, 58(197):433--440, 1992.

\bibitem{phamparallel}
Thang Pham.
\newblock Parallelograms and the {VC}-dimension of the distance sets.
\newblock {\em Discrete Appl. Math.}, 349:195--200, 2024.

\bibitem{refpaper}
Thang Pham, Steven Senger, Michael Tait, and Nguyen Thu-Huyen.
\newblock {VC}-dimension and pseudo-random graphs.
\newblock {\em preprint, arXiv:2303.07878}, 2023.

\bibitem{testingdimension}
Kathleen Romanik and Carl Smith.
\newblock Testing geometric objects.
\newblock {\em Comput. Geom.}, 4(3):157--176, 1994.

\bibitem{sumproductRSS}
M.~Rudnev, G.~Shakan, and I.~D. Shkredov.
\newblock Stronger sum-product inequalities for small sets.
\newblock {\em Proc. Amer. Math. Soc.}, 148(4):1467--1479, 2020.

\bibitem{understandingML}
Shai Shalev-Shwartz and Shai Ben-David.
\newblock {\em Understanding machine learning: From theory to algorithms}.
\newblock Cambridge university press, 2014.

\bibitem{sisask}
Olof Sisask.
\newblock Convolutions of sets with bounded {VC}-dimension are uniformly
  continuous.
\newblock {\em Discrete Anal.}, pages Paper No. 1, 25, 2021.

\bibitem{steinshakarchi}
Elias~M. Stein and Rami Shakarchi.
\newblock {\em Fourier analysis: An introduction}, volume~1 of {\em Princeton
  Lectures in Analysis}.
\newblock Princeton University Press, Princeton, NJ, 2003.

\bibitem{sumproductT}
Terence Tao.
\newblock The sum-product phenomenon in arbitrary rings.
\newblock {\em Contrib. Discrete Math.}, 4(2):59--82, 2009.

\bibitem{terras}
Audrey Terras.
\newblock {\em Fourier analysis on finite groups and applications}, volume~43
  of {\em London Mathematical Society Student Texts}.
\newblock Cambridge University Press, Cambridge, 1999.

\end{thebibliography}
\bibliographystyle{plain}

\end{document}